\newtheorem{theorem}{Theorem}[section]
\newtheorem*{theorem*}{Theorem}
\newtheorem*{claim*}{Claim}
\newtheorem{proposition}[theorem]{Proposition}
\theoremstyle{definition}
\newtheorem{remark}[theorem]{Remark}
\newtheorem{lemma}[theorem]{Lemma}
\newtheorem{definition}[theorem]{Definition}
\begin{document}
\def\cprime{$'$}

\title[Order-theoretic characterization of the WEP]{An intrinsic order-theoretic characterization\\
of the weak expectation property}
\author{Martino Lupini}
\address{Mathematics Department\\
California Institute of Technology\\
1200 E. California Blvd\\
MC 253-37\\
Pasadena, CA 91125}
\email{lupini@caltech.edu}
\urladdr{http://www.lupini.org/}
\thanks{The author was partially supported by the NSF Grant DMS-1600186.}
\subjclass[2000]{Primary 47L25, 46L89; Secondary 46A55, 52A07}
\keywords{Operator system, function system, weak expectation property, Riesz separation property, positively existentially closed, matrix sublinear functional}
\dedicatory{}

\begin{abstract}
We prove the following characterization of the weak expectation property for operator systems in terms of Wittstock's  matricial Riesz separation property: an operator system $S$ satisfies the weak expectation property if and only if $M_{q}\left( S\right) $ satisfies the matricial Riesz separation property for every $q\in \mathbb{N}$. This can be seen as the noncommutative analog of the characterization of simplex spaces among function systems in terms of the classical Riesz separation property.
\end{abstract}

\maketitle

\section{Introduction\label{Section:intro}}

An \emph{operator system} is a closed subspace of a unital C*-algebra that
contains the unit and is invariant under the involution. Operator systems
can be seen as the noncommutative analog of compact convex sets. Indeed, one
can associate with any compact convex set the operator system $A(K)\subset
C(K)$ of continuous affine complex-valued functions on $K$.\ The operator
systems arising in this way are called \emph{function systems}, and are
precisely the operator systems that can be represented within an abelian
unital C*-algebra. The study of operator systems goes as far back as the
works of Arveson in the late 1960s \cite%
{arveson_subalgebras_1969,arveson_subalgebras_1972} and of Choi--Effros in
the 1970s \cite%
{choi_nuclear_1978,choi_lifting_1977,choi_injectivity_1977,choi_lifting_1977,choi_completely_1976}
from the 1970s. Since then, it has been pursued by many authors \cite%
{wittstock_ein_1981,wittstock_matrix_1984,wittstock_extension_1984,schmitt_characterization_1982,paulsen_completely_1982,paulsen_completely_1984,loebl_remarks_1981,hopenwasser_extreme_1981,farenick_pure_2004,webster_krein-milman_1999,effros_operator_1997,winkler_non-commutative_1999,farenick_c*-extreme_1997,farenick_c*-extreme_1993,farenick_operator_2012,farenick_operator_2014,farenicK_extremal_2000,davidson_choquet_2015,arveson_noncommutative_2008,arveson_noncommutative_2011,arveson_noncommutative_2010,kavruk_tensor_2011}%
; see also the monographs \cite%
{pisier_introduction_2003,effros_operator_2000,paulsen_completely_2002} .

The program of studying \textquotedblleft noncommutative
order\textquotedblright\ within the framework of operator systems has been
explicitly proposed by Effros in 1977 \cite{effros_aspects_1978}. Towards
this goal, in view of the key role that \emph{Choquet simplices} play in
classical convexity theory, it is important to understand what is the
correct noncommutative analog of Choquet simplices. This can be a
challenging problem, as many different equivalent characterization of
Choquet simplices exist. Several of such characterizations admit natural
noncommutative analogues, which are not obviously equivalent. It might
therefore seem arbitrary to decide that any given one of such noncommutative
analogues is the \textquotedblleft right\textquotedblright\ notion of
noncommutative Choquet simplex. However, it is arguable that, if such a
noncommutative analogues turn out to be equivalent, at least under some
generous assumptions, then the corresponding notion of \textquotedblleft
noncommutative Choquet simplex\textquotedblright\ would be sufficiently
robust and, in a sense, the correct one.

First of all, in order to have hopes of a well-behaved theory, or at least
of a theory where as many results as possible from the commutative case can
be transferred, one should restrict to the class of \emph{exact }operator
systems. Indeed, (non)exactness has no analogue in the commutative setting,
and any function system is automatically exact. The result of Junge and
Pisier \cite{junge_bilinear_1995} that the class of finite-dimensional
operator systems with respect to the completely bounded Banach--Mazur
distance is \emph{not }separable (unlike the class of finite-dimensional 
\emph{exact }operator systems) also suggests that the class of nonexact of
operator systems is too wild.

The main result of this paper is that, within the class of exact operator
systems, three natural noncommutative analogues of the notion of Choquet
simplex of very different nature---operator-theoretic, order-theoretic, and
model-theoretic---are in fact equivalent.\ The first characterization of
Choquet simplices that we consider can be formulated in terms of the \emph{%
completely positive approximation property}. A compact convex set $K$ is a
Choquet simplex if and only if the identity map of $A(K)$ is the pointwise
limit of unital completely positive maps that factor through
finite-dimensional injective operator systems (which in this case can be
chosen to be function systems). This definition applies equally well to the
noncommutative setting, yielding the important notion of \emph{nuclear }%
operator systems. It is hard to understate the importance of nuclearity for
the theory of operator algebras and operator systems. For instance, this
notion plays a crucial role in the Choi--Effros solution to the completely
positive lifting problem for C*-algebras \cite{choi_completely_1976}.
Several equivalent reformulations of nuclearity are already known; see for
instance \cite%
{kavruk_quotients_2013,kavruk_nuclearity_2014,han_approximation_2011}.

The second characterization of Choquet simplices that we consider is
order-theoretic. In classical convexity, this can be stated in terms of the 
\emph{Riesz separation property} for the space $A(K)$ as an ordered vector
space \cite{alfsen_compact_1971}. A \emph{matricial }version of the Riesz
separation property for operator systems has been considered by Wittstock in 
\cite{wittstock_ein_1981,wittstock_matrix_1984,wittstock_extension_1984}.
This property, formulated in terms of the notion of matrix sublinear
functional introduced therein, plays a crucial role in Wittstock's proof of
the decomposition and extension theorems for completely bounded maps
(obtained independently by Haagerup and by Paulsen). Along the way,
Wittstock proved that \emph{for dual operator systems} the matricial Riesz
separation property is equivalent to injectivity \cite[Satz 3.6]%
{wittstock_ein_1981}; see also \cite[Theorem 2.4]%
{schmitt_characterization_1985}. The main result of this paper is that one
can, more generally, provide a similar intrinsic order-theoretic
characterization of nuclearity. Precisely, an exact operator system $S$ is
nuclear if and only if, for every $q\in \mathbb{N}$, $M_{q}(S)$ endowed with
its canonical operator system structure satisfies the matricial Riesz
separation property.

In fact, our proof applies to arbitrary (not necessarily exact) operator
systems, in which case it provides a characterization of the weak
expectation property. Thus, an operator system $S$ satisfies the weak
expectation property if and only if $M_{q}\left( S\right) $ satisfies the
matricial Riesz separation property for every $q\in \mathbb{N}$. Other
characterizations of the weak expectation property have been obtained in 
\cite{farenick_characterisations_2013,kavruk_weak_2012} in the case of
C*-algebras.\ Particularly, in \cite[Theorem 7.4]{kavruk_weak_2012} the weak
expectation property for C*-algebras is shown to be equivalent to the
\textquotedblleft complete tight Riesz interpolation
property\textquotedblright\ introduced therein \cite[Definition 7.1]%
{kavruk_weak_2012}. The complete tight Riesz interpolation property, while
superficially similar, is very different in spirit from Wittstock's
matricial Riesz separation property. For instance, while the matricial Riesz
separation property is an \emph{intrinsic }notion, which only refers to the
given operator system $S$ itself and its matricial positive cones, the
complete tight Riesz interpolation property is defined in terms of a
concrete realization of $S$ as a space of operators. While different in
spirit, these notions turn out to be in fact equivalent, as we deduce from
our main results.

The third and last characterization of Choquet simplices that we consider is
model-theoretic. It asserts that a compact convex set $K$ is a Choquet
simplex if and only if the corresponding function system $A(K)$ is \emph{%
positively existentially closed }within the class of function systems \cite[%
Subsection 6.4]{lupini_fraisse_2015}. This means that if one can find in a
function system $V$ containing $A(K)$ a certain \emph{configuration} defined
in terms of conditions of the form $\left\Vert p(\bar{x})\right\Vert <r$,
where $p(\bar{x})$ is a degree $1$ *-polynomial with coefficients from $A(K)$
and $r\in \mathbb{R}$, then one can also find such a configuration within $%
A(K)$. We show that, similarly, an operator system satisfies the weak
expectation property if and only if it is positively existentially closed%
\emph{\ }within the class of operator systems. This is defined as above,
where one considers configurations defined in terms of norms of \emph{%
matrices }of degree $1$ *-polynomials; see Section \ref{Section:model}. In
the case of C*-algebras, this recovers a result of Goldbring and Sinclair
from \cite{goldbring_omitting_2015}. The model-theoretic notion of
positively existentially closed structure, and the related notion of
positively existential embedding, have recently found several applications
to the study of C*-algebras and C*-dynamics, as in the work of Goldbring and
Sinclair on the Kirchberg embedding problem \cite{goldbring_kirchbergs_2015}
and the works of Barlak and Szabo \cite{barlak_sequentially_2016} and
Gardella and the author \cite{gardella_equivariant_2016} providing a unified
approach to several preservation results in C*-dynamics for actions of
compact groups with finite Rokhlin dimension. This perspective has also been
used to generalize these preservation results to the case of compact \emph{%
quantum }groups \cite{barlak_spatial_2017,gardella_rokhlin_2017}.

The rest of this paper is divided into two sections. In Section \ref%
{Section:WEP} we recall some terminology concerning \textquotedblleft
noncommutative order\textquotedblright\ as introduced by Wittstock,
including matrix sublinear functionals and the matricial Riesz separation
property, and then we prove the order-theoretic characterization of the weak
expectation property mentioned above. In\ Section \ref{Section:model} we
describe in more detail the notion of positively existentially closed
operator system, and prove its equivalence with the weak expectation
property.

In the following, we adopt standard notations from the theory of operator
systems.\ Particularly, we denote operator systems as $S$ or $T$. We let $%
M_{q}(S)$ be the space of $q\times q$ matrices with entries in $S$, endowed
with its canonical operator system structure obtained from the
identification with $M_{n}(\mathbb{C})\otimes S$. If $\phi :S\rightarrow T$
is a linear map between operator systems, and $q\in \mathbb{N}$, then we let 
$\phi _{q}:M_{q}(S)\rightarrow M_{q}(T)$ be the corresponding \emph{%
amplification }defined by $\phi _{q}\left( \left[ x_{ij}\right] \right) =%
\left[ \phi (x_{ij})\right] $. The \emph{completely bounded norm }$%
\left\Vert \phi \right\Vert _{\mathrm{cb}}$ of $\phi $ is defined to be the
supremum of $\left\Vert \phi _{q}\right\Vert $ for $q\in \mathbb{N}$. We
denote by $\mathrm{Ball}\left( S\right) $ the closed unit ball of the
operator system $S$.

\section{An order-theoretic characterization of the weak expectation
property \label{Section:WEP}}

\subsection{The weak expectation property}

Let $S$ be an operator system, and $S^{\ast \ast }$ be its second dual. We
endow $S^{\ast \ast }$ with its canonical operator system structure, and
identify $S$ with its image under the canonical inclusion inside $S^{\ast
\ast }$. The notion of \emph{weak expectation property }for $S$ has been
introduced in \cite[Definition 6.4]{kavruk_quotients_2013} and \cite[Section
4]{effros_injectivity_2001}.

\begin{definition}
The operator system $S$ has the weak expectation property if for any
inclusion of operator systems $E\subset F$ and unital completely positive
map $\phi :E\rightarrow S$, there exists a unital completely positive map $%
\tilde{\phi}:F\rightarrow S^{\ast \ast }$ extending $\phi $.
\end{definition}

The following characterization of the weak expectation property is a
consequence of \cite[Theorem 6.1]{choi_injectivity_1977}.

\begin{lemma}
\label{Lemma:characterization}Suppose that $S$ is an operator system. Then $%
S $ has satisfies the weak expectation property if and only if for every $%
d\in \mathbb{N}$, inclusion of operator systems $E\subset M_{d}(\mathbb{C})$
and unital completely positive map $\phi :E\rightarrow S$, there exists a
unital completely positive map $\tilde{\phi}:M_{d}(\mathbb{C})\rightarrow
S^{\ast \ast }$ extending $\phi $.
\end{lemma}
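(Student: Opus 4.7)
The forward implication is immediate, since $M_{d}(\mathbb{C})$ and the inclusion $E\subset M_{d}(\mathbb{C})$ constitute particular instances of the operator system inclusions contemplated in the definition of the weak expectation property.

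For the converse, I would proceed in two stages. The plan rests on the standard equivalence that $S$ has the weak expectation property if and only if $S^{\ast\ast}$ is injective, so it suffices to show that the matricial hypothesis implies injectivity of $S^{\ast\ast}$. By the Choi--Effros characterization \cite[Theorem 6.1]{choi_injectivity_1977}, an operator system $T$ is injective if and only if, for every $d\in\mathbb{N}$, every subsystem $E\subset M_{d}(\mathbb{C})$, and every unital completely positive map $\psi:E\rightarrow T$, there exists a unital completely positive extension $\tilde{\psi}:M_{d}(\mathbb{C})\rightarrow T$. Applied to $T=S^{\ast\ast}$, this reduces the problem to upgrading the hypothesis---which only provides matricial extensions for maps $\phi:E\rightarrow S$---to arbitrary unital completely positive maps $\psi:E\rightarrow S^{\ast\ast}$.

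The bridge would be a weak-$\ast$ approximation argument. Given a unital completely positive $\psi:E\rightarrow S^{\ast\ast}$ with $E\subset M_{d}(\mathbb{C})$, one first approximates $\psi$ in the point-weak-$\ast$ topology by unital completely positive maps $\psi_{\alpha}:E\rightarrow S$; by hypothesis, each $\psi_{\alpha}$ extends to a unital completely positive $\tilde{\psi}_{\alpha}:M_{d}(\mathbb{C})\rightarrow S^{\ast\ast}$. Because $M_{d}(\mathbb{C})$ is finite-dimensional and $\mathrm{Ball}(S^{\ast\ast})$ is weak-$\ast$ compact, Banach--Alaoglu produces a subnet of $(\tilde{\psi}_{\alpha})$ converging point-weak-$\ast$ to some $\tilde{\psi}:M_{d}(\mathbb{C})\rightarrow S^{\ast\ast}$, which is unital completely positive (as the point-weak-$\ast$ limit of such maps) and restricts to $\psi$ on $E$.

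I expect the delicate step to be the point-weak-$\ast$ approximation of $\psi$ by unital completely positive maps into $S$. Here finite-dimensionality of $E$ is essential: via the Choi matrix correspondence, such a $\psi$ is encoded as a positive element of a finite matrix amplification of $S^{\ast\ast}$, and one can then exploit the weak-$\ast$ density of $S$ in $S^{\ast\ast}$ (and hence of $M_{q}(S)$ in $M_{q}(S^{\ast\ast})$ at the level of positive elements, possibly after an Arveson extension of $\psi$ from $E$ to $M_{d}(\mathbb{C})$ with values in a suitable injective envelope) to produce the approximants. Once this density is established, the remainder of the argument is a straightforward compactness extraction.
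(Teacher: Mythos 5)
Your forward direction is fine: the matricial condition is literally the special case $F=M_{d}(\mathbb{C})$ of the definition of the weak expectation property. The converse, however, rests on a false premise. The ``standard equivalence'' you invoke --- that $S$ has the weak expectation property if and only if $S^{\ast\ast}$ is injective --- does not hold. Injectivity of $S^{\ast\ast}$ does imply the weak expectation property (extend $\phi:E\rightarrow S\subset S^{\ast\ast}$ directly by injectivity), but it is strictly stronger: for a C*-algebra, injectivity of the bidual is equivalent to \emph{nuclearity}, not to the weak expectation property. Concretely, take $S=B(H)$ with $H$ infinite-dimensional. Being injective, $B(H)$ satisfies both the weak expectation property and the hypothesis of the lemma, yet $B(H)^{\ast\ast}$ is not an injective von Neumann algebra since $B(H)$ is not nuclear. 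So the lemma's hypothesis cannot imply injectivity of $S^{\ast\ast}$, and your reduction is unsalvageable as stated.

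The same example pinpoints where the argument breaks: your ``bridge'' --- approximating an arbitrary unital completely positive map $\psi:E\rightarrow S^{\ast\ast}$ with $E\subset M_{d}(\mathbb{C})$ in the point-weak-$\ast$ topology by unital completely positive maps into $S$ --- is precisely local reflexivity of $S$, which fails for general operator systems, $B(H)$ again being the standard counterexample. Weak-$\ast$ density of $M_{d}(S)^{+}$ in $M_{d}(S^{\ast\ast})^{+}$ handles only the case $E=M_{d}(\mathbb{C})$, where there is nothing to extend; for a proper subsystem $E$ you cannot encode $\psi$ by a Choi matrix without first extending it to all of $M_{d}(\mathbb{C})$ with values in $S^{\ast\ast}$, which is the very thing being proved, and an Arveson extension into an injective envelope lands outside $S^{\ast\ast}$, so its approximants need not take values in $S$. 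The paper itself gives no argument beyond citing \cite[Theorem 6.1]{choi_injectivity_1977}, and the content of that theorem is exactly the self-improvement you need, carried out the other way around: one keeps the initial map valued in $S$ (never in $S^{\ast\ast}$) throughout, and passes from subsystems of matrix algebras to arbitrary inclusions $E\subset F$ by combining Arveson extension on the ambient side with the $\sigma\left(S^{\ast\ast},S^{\ast}\right)$-compactness of order intervals (equivalently, of bounded sets) in $M_{n}\left(S^{\ast\ast}\right)$ to extract the extension $F\rightarrow S^{\ast\ast}$ as a cluster point. Injectivity of $S^{\ast\ast}$ is neither needed nor available.
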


The following characterization of the weak expectation property is an
consequence of Lemma \ref{Lemma:characterization} via a standard
approximation argument; see also \cite[Proposition 4.12]%
{goldbring_omitting_2015} and \cite[Proposition 2.3.8]%
{brown_c*-algebras_2008}. We present the details for the sake of
completeness.

\begin{proposition}
\label{Proposition:characterization}Suppose that $S$ is an operator system.
The following statements are equivalent:

\begin{enumerate}
\item $S$ satisfies the weak expectation property;

\item for any $d\in \mathbb{N}$, inclusion of operator systems $E\subset
M_{d}(\mathbb{C})$, unital completely positive map $\phi :E\rightarrow S$,
and $\varepsilon >0$, there exists a linear map $\psi :M_{d}(\mathbb{C}%
)\rightarrow S$ such that $\left\Vert \psi _{d}\right\Vert \leq
1+\varepsilon $, $\left\Vert \psi (1)-1\right\Vert \leq \varepsilon $, and $%
\left\Vert \psi |_{E}-\phi \right\Vert \leq \varepsilon $;

\item for any $d\in \mathbb{N}$, inclusion of operator systems $E\subset
M_{d}(\mathbb{C})$, unital completely positive map $\phi :E\rightarrow S$,
and $\varepsilon >0$, there exists a unital completely positive map $\psi
:M_{d}(\mathbb{C})\rightarrow S$ such that $\left\Vert \psi |_{E}-\phi
\right\Vert <\varepsilon $.
\end{enumerate}
\end{proposition}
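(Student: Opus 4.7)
The plan is to establish the cycle of implications $(3)\Rightarrow(2)\Rightarrow(1)\Rightarrow(3)$. The first of these is immediate: a UCP map $\psi\colon M_d(\mathbb{C})\to S$ satisfies $\psi(1)=1$ and $\|\psi\|_{\mathrm{cb}}=1$, hence in particular $\|\psi_d\|\le 1\le 1+\varepsilon$.

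For $(2)\Rightarrow(1)$: apply condition (2) with $\varepsilon=1/n$ to obtain linear maps $\psi^n\colon M_d(\mathbb{C})\to S$ with $\|\psi^n_d\|\le 1+1/n$, $\|\psi^n(1)-1\|\le 1/n$, and $\|\psi^n|_E-\phi\|\le 1/n$. Since $\|\psi^n\|_{\mathrm{cb}}=\|\psi^n_d\|$ (Smith's lemma), the sequence lies in a bounded subset of $CB(M_d(\mathbb{C}),S^{**})\cong M_d(S^{**})=M_d(S)^{**}$. Banach--Alaoglu then produces a weak* cluster point $\tilde\phi\colon M_d(\mathbb{C})\to S^{**}$; weak* lower semicontinuity of the norm yields $\|\tilde\phi\|_{\mathrm{cb}}\le 1$, while $\tilde\phi(1)=1$ and $\tilde\phi|_E=\phi$ follow from the fact that the corresponding sequences already converge in norm. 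A unital completely contractive map into an operator system (embedded in some $B(H)$) is automatically UCP, so Lemma \ref{Lemma:characterization} applies and yields (1).

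The substantial direction is $(1)\Rightarrow(3)$. Lemma \ref{Lemma:characterization} supplies a UCP extension $\tilde\phi\colon M_d(\mathbb{C})\to S^{**}$ of $\phi$, whose Choi matrix $\tilde P$ lies in $M_d(S^{**})_+=(M_d(S)^{**})_+$. The key technical step is the weak* density of $M_d(S)_+$ in $M_d(S^{**})_+$: by Hahn--Banach, any weak*-continuous functional separating $\tilde P$ from the weak* closure of $M_d(S)_+$ would correspond to an $\eta\in M_d(S)^*$ positive on $M_d(S)_+$; such an $\eta$ is a positive functional, and its weak*-continuous extension to $M_d(S^{**})$ remains positive, contradicting the separation. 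Choose a net $P^\alpha\in M_d(S)_+$ with $P^\alpha\to\tilde P$ weak*. The corresponding CP maps $\psi^\alpha\colon M_d(\mathbb{C})\to S$ satisfy $\psi^\alpha(m)\to\phi(m)$ weakly in $S$ for each $m\in E$ (using $\tilde\phi(m)=\phi(m)\in S$), so Mazur's theorem, applied simultaneously to the values on a finite basis of $E$, supplies convex combinations $\bar\psi^\beta$---still CP---with $\|\bar\psi^\beta|_E-\phi\|\to 0$. In particular, $a^\beta:=\bar\psi^\beta(1)\in S_+$ satisfies $\|a^\beta-1\|\to 0$.

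It remains to upgrade the $\bar\psi^\beta$ to genuinely UCP maps. Set $t_\beta=1/\|a^\beta\|$, so that $1-t_\beta a^\beta\ge 0$ in $S$, and define
\[
\chi^\beta(x)=t_\beta\,\bar\psi^\beta(x)+\mathrm{tr}(x)\,(1-t_\beta a^\beta),
\]
where $\mathrm{tr}$ is the normalized trace on $M_d(\mathbb{C})$. The second summand is CP (its Choi matrix is a positive multiple of $I_d\otimes(1-t_\beta a^\beta)$), so $\chi^\beta$ is CP, and $\chi^\beta(1)=1$ by construction. Since $t_\beta\to 1$ and $\|1-t_\beta a^\beta\|\to 0$, we obtain $\|\chi^\beta|_E-\phi\|\to 0$, yielding (3). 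The main obstacle in the whole argument is the weak* density of positive cones in the $(1)\Rightarrow(3)$ step; the final rectification to exact unitality, while requiring a specific construction, is elementary once approximate unitality in norm is secured.
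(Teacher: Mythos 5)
Your argument is correct, and its heart---the passage from the weak expectation property to the approximate extension property---follows the same strategy as the paper's: extend $\phi$ via Lemma \ref{Lemma:characterization} to a unital completely positive map into $S^{\ast\ast}$, pull it back into $S$ using weak* density of the positive cone of $M_d(S)$ in that of $M_d(S^{\ast\ast})$ together with a Mazur/convexity argument, and then repair unitality. The differences are worth recording. You organize the equivalences as the cycle $(3)\Rightarrow(2)\Rightarrow(1)\Rightarrow(3)$, proving $(2)\Rightarrow(1)$ directly by a point-weak* compactness argument, whereas the paper proves $(1)\Rightarrow(2)$, cites \cite[Lemma 4.3]{goldbring_model-theoretic_2015} for $(2)\Rightarrow(3)$, and takes $\sigma(S^{\ast\ast},S^{\ast})$-limits for $(3)\Rightarrow(1)$; your route is more self-contained. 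Likewise, your explicit unitalization $\chi^\beta(x)=t_\beta\bar\psi^\beta(x)+\mathrm{tr}(x)\bigl(1-t_\beta\bar\psi^\beta(1)\bigr)$ replaces the paper's appeal to \cite[Lemma 8.1]{lupini_fraisse_2015}, and it is correct: $0\le t_\beta a^\beta\le 1$ makes the second summand completely positive, and both summands tend appropriately. You also justify the weak* density of the positive cones by a bipolar-type separation argument, which the paper takes for granted. Two small inaccuracies, neither fatal: the identity $\|\psi\|_{\mathrm{cb}}=\|\psi_d\|$ for maps \emph{out of} $M_d(\mathbb{C})$ is not Smith's lemma (which concerns maps \emph{into} $M_d(\mathbb{C})$) but its companion, e.g.\ \cite[Corollary 2.2.3]{effros_operator_2000}---alternatively, since the cluster point $\tilde\phi$ is unital with $\|\tilde\phi_d\|\le 1$, the map $\tilde\phi_d$ is a unital contraction, hence positive, so the Choi matrix is positive and $\tilde\phi$ is completely positive by Choi's theorem; and the identification $CB(M_d(\mathbb{C}),S^{\ast\ast})\cong M_d(S^{\ast\ast})$ via Choi matrices is a linear bijection but not an isometry for the cb-norm, though only uniform boundedness and point-weak* compactness are actually needed there.
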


\begin{proof}
The fact that (3) implies (1) is easily seen by taking a $\sigma \left(
S^{\ast \ast },S^{\ast }\right) $-limit. The implication (2)$\Rightarrow $%
(3) is a standard approximation argument; see \cite[Lemma 4.3]%
{goldbring_model-theoretic_2015}. We prove the remaining implication (1)$%
\Rightarrow $(2).

Fix $d\in \mathbb{N}$, an operator system $E\subset M_{d}(\mathbb{C})$, a
unital completely positive map $\phi :E\rightarrow S$, and $\varepsilon >0$.
By assumption there exists a unital completely positive map $\tilde{\phi}%
:M_{d}(\mathbb{C})\rightarrow S^{\ast \ast }$ such that $\tilde{\phi}%
|_{E}=\phi $. Consider the correspondence between completely positive maps $%
\psi :M_{d}(\mathbb{C})\rightarrow S^{\ast \ast }$ and positive elements $%
a_{\psi }=\left[ \psi \left( e_{ij}\right) \right] $ of $M_{d}\left( S^{\ast
\ast }\right) $. Since $M_{d}(S)$ is $\sigma \left( S^{\ast \ast },S^{\ast
}\right) $-weakly dense in $M_{d}\left( S^{\ast \ast }\right) $, there
exists a net $\left( \psi _{\lambda }\right) $ of completely positive maps $%
\psi _{\lambda }:M_{d}(\mathbb{C})\rightarrow S$ such that $\psi _{\lambda
}(x)\rightarrow \phi (x)$ in the $\sigma \left( S^{\ast \ast },S^{\ast
}\right) $-topology for every $x\in M_{d}(\mathbb{C})$. Fix $x_{1},\ldots
,x_{n}\in E$ and $\delta >0$. Consider the net%
\begin{equation*}
\eta _{\lambda }=\left( \psi _{\lambda }\left( x_{1}\right) -\phi \left(
x_{1}\right) ,\ldots ,\psi _{\lambda }\left( x_{n}\right) -\phi \left(
x_{n}\right) ,\psi _{\lambda }(1)-1\right)
\end{equation*}%
in $S\oplus \cdots \oplus S$. We have that $\left( \eta _{\lambda }\right) $
converges to $0$ weakly. It follows from the Hahn-Banach theorem that the
norm closure of the convex hull of $\left( \eta _{\lambda }\right) $
coincides with the weak closure of the convex hull of $\left( \eta _{\lambda
}\right) $. Therefore $0$ belongs to the norm closure of the convex hull of $%
\left( \eta _{\lambda }\right) $. Therefore there exists a convex
combinations $\bar{\psi}$ of the $\psi _{\lambda }$'s such that $\left\Vert 
\bar{\psi}\left( x_{i}\right) -\phi \left( x_{i}\right) \right\Vert <\delta $
for $i=1,2,\ldots ,n$ and $\left\Vert \bar{\psi}(1)-1\right\Vert <\delta $.
Since $\bar{\psi}$ is a convex combination of the $\psi _{\lambda }$'s, we
have that $\bar{\psi}$ is completely positive, and in particular $\left\Vert 
\bar{\psi}\right\Vert _{cb}=\left\Vert \bar{\psi}(1)\right\Vert \leq
1+\delta $. By \cite[Lemma 8.1]{lupini_fraisse_2015} there exists a unital
completely positive map $\psi :M_{d}(\mathbb{C})\rightarrow S$ such that $%
\left\Vert \psi -\bar{\psi}\right\Vert \leq 2d^{2}\delta $. By choosing $%
\delta >0$ small enough, one can ensure that $\left\Vert \psi |_{E}-\phi
\right\Vert \leq \varepsilon $. This concludes the proof.
\end{proof}

Recall that an operator system $S$ is \emph{exact }if for any
finite-dimensional subspace $E$ of $S$ and $\varepsilon >0$ there exist $%
d\in \mathbb{N}$, a subspace $F$ of $M_{d}(\mathbb{C})$, and unital
completely positive maps $\phi :E\rightarrow F$ and $\psi :F\rightarrow E$
such that $\left\Vert \psi \circ \phi -\mathrm{id}_{E}\right\Vert
<\varepsilon $ and $\left\Vert \psi \circ \phi -\mathrm{id}_{F}\right\Vert
<\varepsilon $. An operator system $S$ is \emph{nuclear }if it satisfies the 
\emph{completely positive approximation property}, namely there exists nets $%
\left( \rho _{i}\right) ,\left( \gamma _{i}\right) $ of unital completely
positive maps $\gamma _{i}:S\rightarrow M_{d_{i}}(\mathbb{C})$ and $\rho
_{i}:M_{d_{i}}(\mathbb{C})\rightarrow S$ such that $\left\Vert \left( \rho
_{i}\circ \gamma _{i}\right) (x)-x\right\Vert \rightarrow 0$ for every $x\in
S$. It follows from Proposition \ref{Proposition:characterization} that an
operator system is nuclear if and only if it is exact and it satisfies the
weak expectation property.

\subsection{Matrix sublinear functionals and noncommutative order}

We recall in these sections some notions about matrix sublinear functionals
and noncommutative order introduced by Wittstock in \cite%
{wittstock_ein_1981, wittstock_matrix_1984, wittstock_extension_1984}; see
also \cite{effros_aspects_1978, schmitt_characterization_1985}.

Suppose that $V$ is an ordered real vector spaces. For subsets $A,B$ of $V$,
we let $A\preccurlyeq B$ if and only if for every $b\in B$ there exists $%
a\in A$ such that $a\leq b$; see \cite[Definition 2.1.1]{wittstock_ein_1981}%
. We identify an element $a$ of $V$ with the corresponding singleton.
Consistently, we write $a\preccurlyeq B$ if $a\leq b$ for every $b\in B$. A $%
\ast $-vector space is a complex vector space $V$ endowed with a
conjugate-linear map $V\rightarrow V$, $x\mapsto x^{\ast }$ satisfying $%
\left( x^{\ast }\right) ^{\ast }=x$. The real subspace of elements
satisfying $x^{\ast }=x$ will be denoted by $V_{\mathrm{sa}}$. The space $%
M_{n}\left( V\right) $ of $n\times n$ matrices with entries in $V$ has a
canonical $\ast $-vector space structure obtained by setting $\left[ x_{ij}%
\right] ^{\ast }=\left[ x_{ji}^{\ast }\right] $. A $\ast $-vector space is
ordered if it is endowed with a distinguished proper convex cone $%
V^{+}\subset V_{\mathrm{sa}}$, and matrix ordered if it is endowed with
distinguished proper convex cones $M_{n}\left( V\right) ^{+}\subset
M_{n}\left( V\right) _{\mathrm{sa}}$ such that $\gamma ^{\ast }M_{n}\left(
V\right) ^{+}\gamma \subset M_{m}\left( V\right) ^{+}$ for $n,m\in \mathbb{N}
$ and $\gamma \in M_{n,m}(\mathbb{C})$. Any operator system is endowed with
a canonical matrix ordered $\ast $-vector space structure.

The notion of matrix sublinear functional has been introduced by Wittstock
in \cite[Definition 2.1.1]{wittstock_ein_1981}.

\begin{definition}
\label{Definition:matrix-sublinear}Let $V$ be a $\ast $-vector space and $W$
be an operator system. Suppose that $\theta $ is a sequence $\left( \theta
_{n}\right) _{n\in \mathbb{N}}$, where $\theta _{n}$ is a function that
assigns to any element $v$ of $M_{n}\left( V\right) _{\mathrm{sa}}$ a subset 
$\theta _{n}\left( v\right) $ of $M_{n}\left( W\right) _{\mathrm{sa}}$. Then 
$\theta :V\rightarrow W$ is a \emph{matrix sublinear functional }if, for
every $u,v\in M_{n}\left( V\right) _{\mathrm{sa}}$, $\gamma \in M_{n,m}(%
\mathbb{C})$, and $n,m\in \mathbb{N}$, one has the following:

\begin{enumerate}
\item $\theta _{n}\left( v\right) $ is nonempty,

\item $\theta _{n}\left( u+v\right) \preccurlyeq \theta _{n}\left( u\right)
+\theta _{n}\left( v\right) $,

\item $0\preccurlyeq \theta _{n}\left( 0\right) $,

\item $\theta _{m}\left( \gamma ^{\ast }v\gamma \right) \preccurlyeq \gamma
^{\ast }\theta _{n}\left( v\right) \gamma $.
\end{enumerate}

We say that a matrix sublinear functional $\theta :V\rightarrow W$ is \emph{%
completely positive} if $0\preccurlyeq \theta _{n}\left( v\right) $ for
every $n\in \mathbb{N}$ and $v\in M_{n}\left( V\right) _{\mathrm{sa}}$.
\end{definition}

Further properties of matrix sublinear functionals are listed in \cite[Lemma
2.1.3]{wittstock_ein_1981}. Observe that a (completely positive) selfadjoint
linear map $\phi :V\rightarrow W$ can be regarded as a (completely positive)
matrix sublinear functional in the obvious say.

Suppose that $S$ is an operator system, and $\alpha \in M_{n}(\mathbb{C})_{%
\mathrm{sa}}$. An element $x$ of $M_{n}(S)_{\mathrm{sa}}$ is $\alpha $-\emph{%
positive} if, whenever $\gamma _{1},\ldots ,\gamma _{\ell }\in M_{n,m}(%
\mathbb{C})$ for $\ell ,m\in \mathbb{N}$ are such that $\gamma _{1}^{\ast
}\alpha \gamma _{1}+\cdots +\gamma _{\ell }^{\ast }\alpha \gamma _{\ell }=0$%
, one has that $\gamma _{1}^{\ast }x\gamma _{1}+\cdots +\gamma _{\ell
}^{\ast }x\gamma _{\ell }\geq 0$ \cite[Definition 3.1(a)]{wittstock_ein_1981}%
; see also \cite[Definition 2.2]{schmitt_characterization_1985}. We say that 
$x\in M_{n}(S)_{\mathrm{sa}}$ is \emph{strictly} $\alpha $-\emph{positive}
if, whenever $\gamma _{1},\ldots ,\gamma _{\ell }\in M_{n,m}(\mathbb{C})$
for $\ell ,m\in \mathbb{N}$ are such that $\gamma _{1}^{\ast }\alpha \gamma
_{1}+\cdots +\gamma _{\ell }^{\ast }\alpha \gamma _{\ell }\geq 0$ one has
that $\gamma _{1}^{\ast }x\gamma _{1}+\cdots +\gamma _{\ell }^{\ast }x\gamma
_{\ell }\geq 0$. An element $v$ of $S_{\mathrm{sa}}$ is a lower $\alpha $%
-bound for $x$ if $v\otimes \alpha \leq x$ \cite[Definition 3.1(b)]%
{wittstock_ein_1981}; see also \cite[Definition 2.2]%
{schmitt_characterization_1985}.

\begin{remark}
\label{Remark:sum}Suppose that $\alpha _{i}\in M_{n_{i}}(\mathbb{C})_{%
\mathrm{sa}}$ and $x_{i}\in M_{n_{i}}(S)_{\mathrm{sa}}$ is (strictly) $%
\alpha _{i}$-positive for $i=1,2,\ldots ,n$. Set $n:=n_{1}+\cdots +n_{\ell }$%
, $\alpha :=\alpha _{1}\oplus \cdots \oplus \alpha _{\ell }\in M_{n}(\mathbb{%
C})_{\mathrm{sa}}$, and $x:=x_{1}\oplus \cdots \oplus x_{\ell }\in M_{n}(S)_{%
\mathrm{sa}}$. Then $x$ is (strictly) $\alpha $-positive.
\end{remark}

We let $\sigma _{m,n}$ be the matrix $1_{m}\oplus \left( -1_{n}\right) \in
M_{m+n}(\mathbb{C})$, where $1_{d}\in M_{d}(\mathbb{C})$ is the $d\times d$
identity matrix for $d\in \mathbb{N}$. The matricial Riesz separation
property has been introduced by Wittstock in \cite[Definition 3.1]%
{wittstock_ein_1981}; see also \cite[Definition 2.2]%
{schmitt_characterization_1985}.

\begin{definition}[Wittstock]
\label{Definition:matricial-Riesz}An operator system $S$ satisfies the \emph{%
matricial Riesz separation property} if for every $n\in \mathbb{N}$, every $%
\sigma _{n,n}$-positive $x\in M_{2n}(S)_{\mathrm{sa}}$ has a lower $\sigma
_{n,n}$-bound $v\in S_{\mathrm{sa}}$.
\end{definition}

The following notion has also been considered by Wittstock in \cite[Theorem
2.3]{wittstock_matrix_1984} under the name of matricial Riesz separation
property. To distinguish it from the property given by Definition \ref%
{Definition:matricial-Riesz}, we call it the positive matricial Riesz
separation property.

\begin{definition}
\label{Definition:positive-matricial-Riesz}An operator system $S$ satisfies
the \emph{positive} matricial Riesz separation property if for every $n\in 
\mathbb{N}$, every strictly $\sigma _{n,n}$-positive $x\in M_{2n}(S)_{%
\mathrm{sa}}$ has a lower $\sigma _{n,n}$-bound $v\in S^{+}$.
\end{definition}

It is remarked at the end of Section 2.1 in \cite[Theorem 2.3]%
{wittstock_matrix_1984} that \textquotedblleft it does not seem to be
obvious that both properties [considered in Definition \ref%
{Definition:matricial-Riesz} and Definition \ref%
{Definition:positive-matricial-Riesz}] are equivalent\textquotedblright .

The matricial Riesz separation property can be seen as the noncommutative
analog of the Riesz separation property for an ordered vector space. Recall
that a real ordered vector space $V$ satisfies the Riesz separation property
if for any $n\in \mathbb{N}$, $x_{1},\ldots ,x_{n},y_{1},\ldots ,y_{n}\in V$
such that $x_{i}\leq y_{j}$ for $i,j\in \left\{ 1,2,\ldots ,n\right\} $
there exists $z\in V$ such that $x_{i}\leq z\leq y_{j}$ for $i,j\in \left\{
1,2,\ldots ,n\right\} $. We say that a $\ast $-vector space $V$ satisfies
the Riesz separation property if $V_{\mathrm{sa}}$ does.

Suppose now that $S$ is a function system, i.e.\ an operator system that can
be represented inside a commutative C*-algebra. In this case, by the Kadison
representation theorem \cite[Theorem II.1.8]{alfsen_compact_1971}, $S$ is
unitally completely order isomorphic to the space $A(K)$ of complex-valued
continuous affine functions on $K$, where $K$ is the state space of $S$
endowed with the w*-topology. It is well known that in this case $A(K)$
satisfies the Riesz separation property if and only if $K$ is a Choquet
simplex \cite[Corollary II.3.11]{alfsen_compact_1971}.\ It is proved in \cite%
[Proposition 2.1]{wittstock_matrix_1984} that $A(K)$ satisfies the Riesz
separation property if and only if it satisfies the positive matricial Riesz
separation property from Definition \ref{Definition:positive-matricial-Riesz}%
. In fact, one can directly prove the following.

\begin{proposition}
\label{Proposition:commutative}Suppose that $S=A(K)$ is a function system
with state space $K$. The following assertions are equivalent.

\begin{enumerate}
\item $S$ satisfies the Riesz separation property;

\item $S$ satisfies the matricial Riesz separation property;

\item $S$ satisfies the positive matricial Ries separation property;

\item $M_{q}(S)$ satisfies the matricial Riesz separation property for every 
$q\in \mathbb{N}$;

\item $M_{q}(S)$ satisfies the positive matricial Riesz separation property
for every $q\in \mathbb{N}$.
\end{enumerate}
\end{proposition}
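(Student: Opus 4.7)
The equivalence (1) $\Leftrightarrow$ (3) is \cite[Proposition 2.1]{wittstock_matrix_1984}, while (4) $\Rightarrow$ (2) and (5) $\Rightarrow$ (3) are immediate upon specializing to $q=1$. For (2) $\Rightarrow$ (1), suppose $x_{1},\dots,x_{n},y_{1},\dots,y_{n}\in S_{\mathrm{sa}}$ satisfy $x_{i}\leq y_{j}$ for all $i,j$; I would consider the block-diagonal element
\[
z := y_{1}\oplus\cdots\oplus y_{n}\oplus(-x_{1})\oplus\cdots\oplus(-x_{n}) \in M_{2n}(S)_{\mathrm{sa}},
\]
check directly from the scalar inequalities that $z$ is $\sigma_{n,n}$-positive, and observe that any lower $\sigma_{n,n}$-bound $v\in S_{\mathrm{sa}}$ for $z$ automatically satisfies $x_{j}\leq v\leq y_{i}$ for every $i,j$, yielding the classical Riesz separation.

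The substantive content is the two implications (1) $\Rightarrow$ (4) and (1) $\Rightarrow$ (5). I would begin by invoking Kadison's representation theorem: assuming (1), identify $S$ with $A(K)$ where $K$ is a Choquet simplex, and $M_{q}(S)$ with the operator system of continuous affine maps $K\to M_{q}(\mathbb{C})$. Under this identification, positivity in $M_{2n}(M_{q}(S))$ corresponds to pointwise positivity in $M_{2n}(M_{q}(\mathbb{C}))$, so a selfadjoint element $x\in M_{2n}(M_{q}(S))$ is (strictly) $\sigma_{n,n}$-positive if and only if $x(k)$ is (strictly) $\sigma_{n,n}$-positive in $M_{2n}(M_{q}(\mathbb{C}))$ for every $k\in K$.

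Since $M_{q}(\mathbb{C})$ is injective, it satisfies both the matricial and the positive matricial Riesz separation properties, so at each $k\in K$ there exists a lower $\sigma_{n,n}$-bound for $x(k)$ in $M_{q}(\mathbb{C})_{\mathrm{sa}}$, and in $M_{q}(\mathbb{C})^{+}$ when $x$ is strictly $\sigma_{n,n}$-positive. The heart of the argument is to upgrade these pointwise bounds to a globally affine continuous selection $v:K\to M_{q}(\mathbb{C})_{\mathrm{sa}}$, which is then an element of $M_{q}(S)_{\mathrm{sa}}$ satisfying $v\otimes\sigma_{n,n}\leq x$. I plan to do so via a matrix-valued version of Edwards' separation theorem for Choquet simplices: pairing with real-linear functionals on $M_{q}(\mathbb{C})_{\mathrm{sa}}$ produces, for the corresponding scalar problem, a concave upper semicontinuous upper envelope and a convex lower semicontinuous lower envelope on $K$ which the simplex property of $K$ allows one to interlace by a continuous affine function; finite-dimensionality of $M_{q}(\mathbb{C})_{\mathrm{sa}}$ is then used to recombine the coordinatewise selections into an $M_{q}(\mathbb{C})_{\mathrm{sa}}$-valued affine map. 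The main obstacle is organizing these scalar selections so that they cohere into a single affine function satisfying the \emph{operator} inequality $v\otimes\sigma_{n,n}\leq x$, and, for case (5), ensuring that the strict $\sigma_{n,n}$-positivity of $x$ produces an affine selector landing in $M_{q}(\mathbb{C})^{+}$; here the strict hypothesis is used to open up a uniform margin that survives the separation argument and forces the resulting affine selector to be pointwise positive.
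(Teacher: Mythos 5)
Your overall architecture agrees with the paper's: identify $M_{2n}(M_q(S))$ with the affine continuous $M_{2n}(M_q(\mathbb{C}))$-valued functions on the simplex $K$, solve the lower-bound problem pointwise in the injective system $M_q(\mathbb{C})$ (the paper cites Wittstock's Theorem 2.4 and Lemma 2.2 for the two cases), and then pass from pointwise solutions to a continuous affine selection. Your explicit block-diagonal argument for (2)$\Rightarrow$(1) is correct and is a useful concretization of what the paper only cites to Wittstock; note only that you must verify $\sigma_{n,n}$-positivity of $z$ pointwise, which for scalar matrices amounts to finding, at each $k\in K$, a real number between $\max_i x_i(k)$ and $\min_j y_j(k)$.

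However, the step you yourself flag as ``the main obstacle'' is genuinely unresolved, and the method you propose for it does not work as described. The set
\[
\Phi(p)=\left\{\alpha\in M_q(\mathbb{C})_{\mathrm{sa}}:\alpha\otimes\sigma_{n,n}\leq x(p)\right\}
\]
is a general closed convex subset of $M_q(\mathbb{C})_{\mathrm{sa}}$, not a product of intervals in any coordinate system; applying Edwards' separation theorem coordinatewise (i.e., to the pairings with a basis of real-linear functionals) produces affine functions each squeezed between its own scalar envelopes, but there is no reason the resulting matrix-valued map takes values in $\Phi(p)$ --- interlacing each coordinate separately does not respect the joint convex constraint $\alpha\otimes\sigma_{n,n}\leq x(p)$. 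The ``recombination'' you defer to is precisely the content of a nontrivial theorem: the paper closes this gap by invoking Lazar's affine selection theorem \cite[Theorem 3.1]{lazar_spaces_1968}, which produces a continuous affine selection of a lower semicontinuous convex-set-valued map $\Phi$ on a Choquet simplex satisfying $\Phi(tp+(1-t)q)\supset t\Phi(p)+(1-t)\Phi(q)$. To complete your argument you should verify these two hypotheses (lower semicontinuity of $p\mapsto\Phi(p)$ from continuity of $x$, and the displayed concavity from affineness of $x$) and cite Lazar's theorem rather than attempt an ad hoc coordinatewise Edwards argument; for assertion (5) one runs the same selection with $\Phi(p)$ replaced by $\Phi(p)\cap M_q(\mathbb{C})^{+}$, whose nonemptiness at each $p$ is exactly what strict $\sigma_{n,n}$-positivity together with Wittstock's Lemma 2.2 provides.
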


\begin{proof}
(1)$\Rightarrow $(4): Suppose that $S$ satisfies the Riesz separation
property, in which case $K$ is a Choquet simplex by \cite[Corollary II.3.11]%
{alfsen_compact_1971}. Fix $q,n\in \mathbb{N}$, and a $\sigma _{n,n}$%
-positive element $a\in M_{2n}\left( M_{q}(S)\right) _{\mathrm{sa}}$.
Observe that $M_{2n}\left( M_{q}(S)\right) $ can be identified with the
space of $M_{2n}\left( M_{q}(\mathbb{C})\right) $-valued continuous affine
functions on $K$. Consistently, we regard $a$ as a function from $K$ to $%
M_{2n}\left( M_{q}(\mathbb{C})\right) _{\mathrm{sa}}$, and denote by $%
a\left( p\right) $ the value of $a$ at $p\in K$. Fix $p\in K$. Since $a$ is $%
\sigma _{n,n}$-positive, $a\left( p\right) \in M_{2n}\left( M_{q}(\mathbb{C}%
)\right) _{\mathrm{sa}}$ is $\sigma _{n,n}$-positive. Therefore by \cite[%
Theorem 2.4]{wittstock_matrix_1984} there exists $\alpha \in M_{q}(\mathbb{C}%
)_{\mathrm{sa}}$ such that $\alpha \otimes \sigma _{n,n}\leq a\left(
p\right) $. This shows that the set%
\begin{equation*}
\Phi \left( p\right) :=\left\{ \alpha \in M_{q}(\mathbb{C})_{\mathrm{sa}%
}:\alpha \otimes \sigma _{n,n}\leq a\left( p\right) \right\}
\end{equation*}%
is a nonempty closed convex subset of $M_{q}(\mathbb{C})_{\mathrm{sa}}$.
Since $a$ is affine, the assignment $p\mapsto \Phi \left( p\right) $
satisfies%
\begin{equation*}
\Phi \left( tp+\left( 1-t\right) q\right) \supset t\Phi \left( p\right)
+\left( 1-t\right) \Phi \left( q\right) \text{.}
\end{equation*}%
Furthermore, since $a$ is continuous, for any open subset $U$ of $M_{q}(%
\mathbb{C})_{\mathrm{sa}}$ the set%
\begin{equation*}
\left\{ p\in K:\Phi \left( p\right) \cap U\neq \varnothing \right\}
\end{equation*}%
is open. Therefore by Lazar's selection theorem \cite[Theorem 3.1]%
{lazar_spaces_1968} there exists $b\in A\left( K,M_{q}(\mathbb{C})_{\mathrm{%
sa}}\right) =M_{q}(S)_{\mathrm{sa}}$ such that $b\left( p\right) \in \Phi
\left( p\right) $ for every $p\in K$ or, equivalently, $b\otimes \sigma
_{n,n}\leq a$. This concludes the proof.

The proof of the implication (1)$\Rightarrow $(5) is entirely analogous to
the proof of the implication (1)$\Rightarrow $(4), where one replaces \cite[%
Theorem 2.4]{wittstock_matrix_1984} with \cite[Lemma 2.2]%
{wittstock_matrix_1984}. The implication (3)$\Rightarrow $(1) is observed in 
\cite[Section 2.2]{wittstock_matrix_1984}, and the implication (2)$%
\Rightarrow $(1) is analogous to the implication (3)$\Rightarrow $(1). The
rest of the implications are trivial.
\end{proof}

It is well known that, in the commutative case, the Riesz separation
property is equivalent to the \emph{approximate }Riesz separation property;
see \cite[Corollary II.3.11]{alfsen_compact_1971}. The natural
noncommutative analog of this fact holds as well.

\begin{lemma}
\label{Lemma:approximate}Let $S$ be an operator system. Suppose that for
every $n\in \mathbb{N}$, every $\sigma _{n,n}$-positive $x\in M_{2n}(S)_{%
\mathrm{sa}}$, and $\varepsilon >0$, the element $x+\varepsilon 1_{n}\in
M_{2n}(S)_{\mathrm{sa}}$ has a lower $\sigma _{n,n}$-bound $v\in M_{q}(S)_{%
\mathrm{sa}}$. Then $S$ satisfies the matricial Riesz separation property.
\end{lemma}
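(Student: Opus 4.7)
The plan is to use the approximate hypothesis to produce an $\varepsilon$-parameterized family of almost-lower-bounds, show that this family is norm-bounded, extract a weak-$*$ cluster point in $S^{**}$, and finally descend to $S$ via a separation argument.

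Fix $n \in \mathbb{N}$ and a $\sigma_{n,n}$-positive element $x \in M_{2n}(S)_{\mathrm{sa}}$. For each $\varepsilon \in (0,1]$, the hypothesis yields $v_{\varepsilon} \in S_{\mathrm{sa}}$ with $v_{\varepsilon} \otimes \sigma_{n,n} \leq x + \varepsilon 1_{2n}$ in $M_{2n}(S)$. Compressing the inequality $x + \varepsilon 1_{2n} - v_{\varepsilon} \otimes \sigma_{n,n} \geq 0$ by the first standard basis vector $e_{1} \in \mathbb{C}^{2n}$ and by $e_{n+1}$ gives, respectively, $v_{\varepsilon} \leq x_{11} + \varepsilon 1_S$ and $-v_{\varepsilon} \leq x_{n+1,n+1} + \varepsilon 1_S$, where $[x_{ij}]$ denote the entries of $x$. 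In particular $\|v_{\varepsilon}\|_{S} \leq \|x\|_{M_{2n}(S)} + 1$, so $(v_{\varepsilon})_{0 < \varepsilon \leq 1}$ is a norm-bounded net in $S_{\mathrm{sa}}$.

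Viewing $(v_{\varepsilon})$ as a bounded net in $S^{**}$ and applying Banach--Alaoglu, we extract a subnet $v_{\lambda}$ that converges in the weak-$*$ topology of $S^{**}$ to some $\tilde{v} \in S^{**}_{\mathrm{sa}}$. The map $w \mapsto w \otimes \sigma_{n,n}$ from $S^{**}$ to $M_{2n}(S^{**})$ is weak-$*$ continuous and the positive cone $M_{2n}(S^{**})^{+}$ is weak-$*$ closed, so the inequalities $v_{\lambda} \otimes \sigma_{n,n} \leq x + \varepsilon_{\lambda} 1_{2n}$ (with $\varepsilon_{\lambda} \to 0$) pass to the limit and give $\tilde{v} \otimes \sigma_{n,n} \leq x$ in $M_{2n}(S^{**})$. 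This establishes the existence of a lower $\sigma_{n,n}$-bound for $x$ \emph{inside $S^{**}$}.

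The main obstacle is the final descent: obtaining $v \in S_{\mathrm{sa}}$ with $v \otimes \sigma_{n,n} \leq x$ in $M_{2n}(S)$, as opposed to merely in $M_{2n}(S^{**})$. The strategy I would pursue is a Hahn--Banach separation argument. Suppose, for contradiction, that no such $v \in S_{\mathrm{sa}}$ exists. Then the affine set $\mathcal{A} := \{x - v \otimes \sigma_{n,n} : v \in S_{\mathrm{sa}}\} \subset M_{2n}(S)_{\mathrm{sa}}$ is disjoint from the closed convex cone $M_{2n}(S)^{+}$, which contains $1_{2n}$ in its norm-interior. The geometric Hahn--Banach theorem produces a nonzero continuous linear functional $\phi$ on $M_{2n}(S)_{\mathrm{sa}}$ that is positive on $M_{2n}(S)^{+}$, vanishes on $\{v \otimes \sigma_{n,n} : v \in S_{\mathrm{sa}}\}$ (because $\mathcal{A}$ is an affine subspace), and satisfies $\phi(x) \leq 0$; combining this with the approximate hypothesis (applied to $v_{\varepsilon}$) forces $\phi(x) \geq -\varepsilon \phi(1_{2n})$ for every $\varepsilon > 0$, so $\phi(x) = 0$. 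The contradiction should then be extracted by using the $\sigma_{n,n}$-positivity of $x$, together with the fact that such a $\phi$ restricts on the diagonal blocks to the same positive functional on $S$, to produce a strict inequality $\phi(x) > 0$. Making this last step rigorous---equivalently, verifying that the cone $M_{2n}(S)^{+} + \{v \otimes \sigma_{n,n} : v \in S_{\mathrm{sa}}\}$ is Archimedean-closed in $M_{2n}(S)_{\mathrm{sa}}$---is the delicate point of the argument.
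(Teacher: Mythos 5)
There is a genuine gap, and it sits exactly where you say it does: the ``descent'' from $S^{\ast\ast}$ to $S$. Your first two paragraphs are correct but only reprove something weaker than the lemma (a lower $\sigma_{n,n}$-bound in $M_{2n}(S^{\ast\ast})$), and the separation argument you sketch for the descent cannot be completed. The hypothesis of the lemma says precisely that $x+\varepsilon 1$ lies in the cone $C:=M_{2n}(S)^{+}+\left\{ v\otimes \sigma _{n,n}:v\in S_{\mathrm{sa}}\right\} $ for every $\varepsilon>0$, hence $x$ lies in the \emph{norm closure} of $C$; the conclusion is that $x\in C$. Consequently no strict separation of $\left\{ x\right\} -\left\{ v\otimes \sigma _{n,n}\right\} $ from $M_{2n}(S)^{+}$ is possible, and the non-strict separation you obtain from the interior point $1_{2n}$ yields only $\phi \geq 0$ on $M_{2n}(S)^{+}$, $\phi |_{\left\{ v\otimes \sigma _{n,n}\right\} }=0$, and $\phi (x)\leq 0$; combined with $\phi (x)\geq -\varepsilon \phi (1_{2n})$ this gives $\phi (x)=0$, which is perfectly consistent and produces no contradiction. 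The hoped-for strict inequality $\phi (x)>0$ would amount to the Archimedean closedness of $C$, which is the entire content of the lemma, so the argument is circular at the decisive step. (Nor does $\sigma_{n,n}$-positivity of $x$ rescue it: a positive functional vanishing on $\left\{ v\otimes \sigma _{n,n}\right\} $ need only be $\geq 0$, not $>0$, on $\sigma_{n,n}$-positive elements.)

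The paper closes this gap by a quantitative successive-approximation argument rather than a compactness-plus-separation one, and the mechanism is worth internalizing. First, Remark \ref{Remark:sum} (direct sums of $\alpha_i$-positive elements are $\alpha_1\oplus\cdots\oplus\alpha_\ell$-positive, and $\sigma_{n_1,n_1}\oplus\cdots\oplus\sigma_{n_\ell,n_\ell}$ is unitarily conjugate to a single $\sigma_{n,n}$) upgrades the hypothesis to a \emph{simultaneous} one: given finitely many $\sigma_{n_i,n_i}$-positive elements $x_i$ and $\varepsilon>0$, a single $v\in S_{\mathrm{sa}}$ satisfies $v\otimes\sigma_{n_i,n_i}\leq x_i+\varepsilon 1$ for all $i$. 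One then builds $v_k\in S_{\mathrm{sa}}$ recursively by applying this simultaneous version to the pair consisting of $x$ (with error $2^{-k}$) and $v_{k-1}\otimes\sigma_{1,1}$, which is $\sigma_{1,1}$-positive (with error $2^{-k}$). The second constraint reads $v_k\leq v_{k-1}+2^{-k}$ and $-v_k\leq -v_{k-1}+2^{-k}$, i.e.\ $\left\Vert v_{k}-v_{k-1}\right\Vert \leq 2^{-k}$, so $(v_k)$ is norm-Cauchy \emph{in $S$} and its limit $v$ satisfies $v\otimes\sigma_{n,n}\leq x$ by norm-closedness of $M_{2n}(S)^{+}$. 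The point you are missing is that the hypothesis must be exploited not just for $x$ alone but simultaneously for $x$ and the previously constructed approximant; that is what converts weak-$*$ accumulation in $S^{\ast\ast}$ into norm convergence in $S$.
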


\begin{proof}
Using Remark \ref{Remark:sum}, it is easy to see that $S$ satisfies the
following property: for every $q,n_{1},\ldots ,n_{\ell }\in \mathbb{N}$, $%
\sigma _{n_{i},n_{i}}$-positive $x_{i}\in M_{2n_{i}}(S)_{\mathrm{sa}}$, and $%
\varepsilon >0$, there exists $v\in S_{\mathrm{sa}}$ such that $v\otimes
\sigma _{n_{i},n_{i}}\leq x_{i}+\varepsilon 1_{n_{i}}$ for $i=1,2,\ldots
,\ell $. Indeed, one can consider $n:=n_{1}+\cdots +n_{\ell }$, $\alpha
:=\sigma _{n_{1},n_{1}}\oplus \cdots \oplus \sigma _{n_{\ell },n_{\ell }}\in
M_{2n}(\mathbb{C})_{\mathrm{sa}}$, and $x:=x_{1}\oplus \cdots \oplus x_{\ell
}\in M_{2n}(S)_{\mathrm{sa}}$. Then $x$ is $\alpha $-positive, and $\alpha $
is unitarily conjugate to $\sigma _{2n,2n}$. Therefore by assumption there
exists $v\in S_{\mathrm{sa}}$ such that $v\otimes \alpha \leq x+\varepsilon
1_{2n}$. Hence $v\otimes \sigma _{n_{i},n_{i}}\leq x_{i}+\varepsilon
1_{n_{i}}$ for $i=1,2,\ldots ,\ell $.

Fix now $n\in \mathbb{N}$ and a $\sigma _{n,n}$-positive element $x\in
M_{2n}(S)$. We want to show that there exists $v\in S_{\mathrm{sa}}$ such
that $v\otimes \sigma _{n,n}\leq x$. By assumption, there exists a $v_{0}\in
S_{\mathrm{sa}}$ such that $v_{0}\otimes \sigma _{n,n}\leq x+1_{n}$. Observe
now that $v_{0}\otimes \sigma _{1,1}$ is $\sigma _{1,1}$-positive. Therefore
by assumption there exists $v_{1}\in S_{\mathrm{sa}}$ such that $%
v_{1}\otimes \sigma _{n,n}\leq x+2^{-1}1_{n}$ and $v_{1}\otimes \sigma
_{1,1}\leq v_{0}\otimes \sigma _{1,1}+2^{-1}1_{2}$. Proceeding in this way,
one can define at the $k$-th step an element $v_{k}\in S_{\mathrm{sa}}$ such
that $v_{k}\otimes \sigma _{n,n}\leq x+2^{-k}1_{n}$ and $v_{k}\otimes \sigma
_{1,1}\leq v_{k-1}\otimes \sigma _{1,1}+2^{-k}1_{2}$. Thus $\left(
v_{k}\right) $ is a Cauchy sequence in $M_{q}(S)_{\mathrm{sa}}$ converging
to an element $v$ of $M_{q}(S)_{\mathrm{sa}}$ such that $v\otimes \sigma
_{n,n}\leq x$.
\end{proof}

The same argument applies to the case of the positive matricial Riesz
separation property, and gives the following lemma.

\begin{lemma}
\label{Lemma:positive-approximate}Let $S$ be an operator system. Suppose
that for every $n\in \mathbb{N}$, every strictly $\sigma _{n,n}$-positive $%
x\in M_{2n}(S)_{\mathrm{sa}}$, and $\varepsilon >0$, the element $%
x+\varepsilon 1_{n}\in M_{2n}(S)_{\mathrm{sa}}$ has a lower $\sigma _{n,n}$%
-bound $v\in M_{q}(S)^{+}$. Then $S$ satisfies the positive matricial Riesz
separation property.
\end{lemma}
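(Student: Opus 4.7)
The plan is to mirror the proof of Lemma \ref{Lemma:approximate} essentially verbatim, while preserving positivity of the lower bound at every stage. Two small additional ingredients are needed. First, Remark \ref{Remark:sum} already preserves \emph{strict} $\alpha$-positivity under block direct sums. Second, whenever $v\in S^{+}$, the element $v\otimes \sigma_{1,1}$ is automatically strictly $\sigma_{1,1}$-positive: for any $\gamma =\binom{\gamma_{1}}{\gamma_{2}}\in M_{2,m}(\mathbb{C})$ one has
\[
\gamma^{\ast}(v\otimes \sigma_{1,1})\gamma \;=\; (\gamma_{1}^{\ast}\gamma_{1}-\gamma_{2}^{\ast}\gamma_{2})\otimes v,
\]
which is positive whenever $\gamma_{1}^{\ast}\gamma_{1}-\gamma_{2}^{\ast}\gamma_{2}\geq 0$, by the matrix-ordered axiom $M_{m}(\mathbb{C})^{+}\otimes S^{+}\subset M_{m}(S)^{+}$.

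With these in hand, I would first upgrade the hypothesis to a multivariable form: given strictly $\sigma_{n_{i},n_{i}}$-positive elements $x_{i}\in M_{2n_{i}}(S)_{\mathrm{sa}}$ for $i=1,\ldots ,\ell$ and $\varepsilon >0$, there exists $v\in S^{+}$ with $v\otimes \sigma_{n_{i},n_{i}}\leq x_{i}+\varepsilon 1_{n_{i}}$ for every $i$. The argument is verbatim the opening of the proof of Lemma \ref{Lemma:approximate}: set $n:=n_{1}+\cdots +n_{\ell}$, $\alpha :=\sigma_{n_{1},n_{1}}\oplus \cdots \oplus \sigma_{n_{\ell},n_{\ell}}$ and $x:=x_{1}\oplus \cdots \oplus x_{\ell}$; then $x$ is strictly $\alpha$-positive by Remark \ref{Remark:sum}, $\alpha$ is unitarily conjugate to $\sigma_{n,n}$, and the hypothesis produces the desired positive $v$.

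Next I would iterate. Starting with a strictly $\sigma_{n,n}$-positive $x\in M_{2n}(S)_{\mathrm{sa}}$, apply the hypothesis to obtain $v_{0}\in S^{+}$ with $v_{0}\otimes \sigma_{n,n}\leq x+1_{n}$. Inductively, given $v_{k-1}\in S^{+}$, the observation above says that $v_{k-1}\otimes \sigma_{1,1}$ is strictly $\sigma_{1,1}$-positive; apply the multivariable version to the pair $x$ and $v_{k-1}\otimes \sigma_{1,1}$ with tolerance $2^{-k}$ to produce $v_{k}\in S^{+}$ with
\[
v_{k}\otimes \sigma_{n,n}\leq x+2^{-k}1_{n},\qquad v_{k}\otimes \sigma_{1,1}\leq v_{k-1}\otimes \sigma_{1,1}+2^{-k}1_{2}.
\]
The second inequality unpacks to $\|v_{k}-v_{k-1}\|\leq 2^{-k}$, so $(v_{k})$ is Cauchy; since $S^{+}$ is closed, its limit $v\in S^{+}$ satisfies $v\otimes \sigma_{n,n}\leq x$, as required.

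I expect no real obstacle here: the skeleton coincides with Lemma \ref{Lemma:approximate}, and the only point demanding care is making sure the strict-positivity version of the hypothesis actually applies at each inductive step. That reduces to the one-line identity above, exploiting only positivity of $v_{k-1}$ and the defining compatibility of the matricial cones. (Note that the statement's ``$v\in M_{q}(S)^{+}$'' should be read as $v\in S^{+}$, so that the conclusion matches Definition \ref{Definition:positive-matricial-Riesz}.)
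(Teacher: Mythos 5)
Your proof is correct and follows exactly the route the paper intends: the paper's own ``proof'' of this lemma is just the remark that the argument of Lemma \ref{Lemma:approximate} carries over, and your write-up is precisely that argument with the one genuinely needed extra observation supplied, namely that $v\otimes\sigma_{1,1}$ is \emph{strictly} $\sigma_{1,1}$-positive when $v\in S^{+}$ (your identity extends verbatim to sums $\sum_{i}\gamma_{i}^{\ast}(v\otimes\sigma_{1,1})\gamma_{i}=\bigl(\sum_{i}\gamma_{i}^{\ast}\sigma_{1,1}\gamma_{i}\bigr)\otimes v$, as the definition requires). You are also right that ``$v\in M_{q}(S)^{+}$'' in the statement is a typo for $v\in S^{+}$.
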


\subsection{The proof of the main theorem}

The goal of this subsection is to prove the following characterization of
the weak expectation property for operator systems in terms of Wittstock's
matricial Riesz separation property.

\begin{theorem}
\label{Theorem:nc-Riesz}Suppose that $S$ is an operator system. The
following assertions are equivalent:

\begin{enumerate}
\item $S$ satisfies the weak expectation property;

\item $M_{q}(S)$ satisfies the matricial Riesz separation property for every 
$q\in \mathbb{N}$;

\item $M_{q}(S)$ satisfies the positive matricial Riesz separation property
for every $q\in \mathbb{N}$;

\item For every $q\in \mathbb{N}$ and matrix sublinear functional $\theta :%
\mathbb{C}\rightarrow M_{q}(S)$ there exists $v\in M_{q}\left( S^{\ast \ast
}\right) _{\mathrm{sa}}$ such that $v\otimes \alpha \preccurlyeq \theta
_{n}(\alpha )$ for every $n\in \mathbb{N}$ and $\alpha \in M_{n}(\mathbb{C}%
)_{\mathrm{sa}}$;

\item For every $q\in \mathbb{N}$ and completely positive matrix sublinear
functional $\theta :\mathbb{C}\rightarrow M_{q}(S)$ there exists $v\in
M_{q}\left( S^{\ast \ast }\right) ^{+}$ such that $v\otimes \alpha
\preccurlyeq \theta _{n}(\alpha )$ for every $n\in \mathbb{N}$ and $\alpha
\in M_{n}(\mathbb{C})_{\mathrm{sa}}$;

\item For every $q\in \mathbb{N}$, $\ast $-vector space $V$, selfadjoint
subspace $W\subset V$, matrix sublinear functional $\theta :V\rightarrow
M_{q}(S)$, and selfadjoint linear map $\phi _{0}:W\rightarrow M_{q}(S)$ such
that $\phi _{0}\preccurlyeq \theta |_{W}$, there exists a selfadjoint linear
map $\phi :V\rightarrow M_{q}\left( S^{\ast \ast }\right) $ such that $\phi
\preccurlyeq \theta $ and $\phi |_{W}=\phi _{0}$.
\end{enumerate}
\end{theorem}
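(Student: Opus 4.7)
The plan is to close all the equivalences via the chain $(1)\Rightarrow(6)\Rightarrow(4),(5)\Rightarrow(2),(3)\Rightarrow(1)$, together with $(2)\Leftrightarrow(3)$. Several of the implications are essentially formal. The specializations $(6)\Rightarrow(4)$ and $(6)\Rightarrow(5)$ are obtained by taking $V=\mathbb{C}$, $W=0$, and $\phi_{0}=0$: a selfadjoint linear map $\phi\colon\mathbb{C}\to M_{q}(S^{\ast\ast})$ is determined by $v:=\phi(1)\in M_{q}(S^{\ast\ast})_{\mathrm{sa}}$, whose $n$-th amplification sends $\alpha$ to $v\otimes\alpha$, and the complete positivity hypothesis of $\theta$ in $(5)$ then forces $v\in M_{q}(S^{\ast\ast})^{+}$. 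The equivalence $(2)\Leftrightarrow(3)$ is obtained by combining Lemmas \ref{Lemma:approximate} and \ref{Lemma:positive-approximate} with the perturbation $x\rightsquigarrow x+\varepsilon 1$, which exchanges ordinary and strict $\sigma_{n,n}$-positivity up to arbitrarily small error.

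For $(4)\Rightarrow(2)$ and $(5)\Rightarrow(3)$, I convert a (strictly) $\sigma_{n,n}$-positive $x\in M_{2n}(M_{q}(S))_{\mathrm{sa}}$ into a matrix sublinear functional $\theta^{x}\colon\mathbb{C}\to M_{q}(S)$ by a standard Wittstock-type construction (cf.\ \cite[Theorem 2.4]{wittstock_matrix_1984}); the $\sigma_{n,n}$-positivity of $x$ is exactly what makes $\theta^{x}$ satisfy the axioms of Definition \ref{Definition:matrix-sublinear}, and strict $\sigma_{n,n}$-positivity supplies complete positivity. Applying $(4)$ or $(5)$ to $\theta^{x}$ and specializing the bound at $\alpha=\sigma_{n,n}$ produces $v\in M_{q}(S^{\ast\ast})_{\mathrm{sa}}$ with $v\otimes\sigma_{n,n}\leq x$; a weak$^{\ast}$ approximation of $v$ by elements of $M_{q}(S)$ together with Lemma \ref{Lemma:approximate} (respectively Lemma \ref{Lemma:positive-approximate}) then yields a lower $\sigma_{n,n}$-bound for $x$ living in $M_{q}(S)$ itself. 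For $(1)\Rightarrow(6)$, I fix a faithful embedding $M_{q}(S)\subset B(K)$. The operator system $M_{q}(B(K))$ is injective, so Wittstock's matricial Hahn--Banach extension theorem (\cite{wittstock_ein_1981,wittstock_extension_1984}) extends $\phi_{0}$ to a selfadjoint linear $\tilde\phi\colon V\to M_{q}(B(K))$ dominated by $\theta$. The WEP of $S$ supplies a UCP map $\pi\colon B(K)\to S^{\ast\ast}$ restricting to the canonical inclusion $S\hookrightarrow S^{\ast\ast}$; composing the amplification $\pi_{q}$ with $\tilde\phi$ yields $\phi:=\pi_{q}\circ\tilde\phi\colon V\to M_{q}(S^{\ast\ast})$, which extends $\phi_{0}$ because $\pi_{q}$ fixes $M_{q}(S)$ pointwise, and remains dominated by $\theta$ because $\pi_{q}$ is UCP and hence order-preserving.

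The principal obstacle will be $(3)\Rightarrow(1)$. By Lemma \ref{Lemma:characterization} it suffices, for every UCP map $\phi\colon E\to S$ with $E\subset M_{d}(\mathbb{C})$, to build a UCP extension $\tilde\phi\colon M_{d}(\mathbb{C})\to S^{\ast\ast}$. Via the Choi--Effros correspondence between UCP maps $M_{d}(\mathbb{C})\to S^{\ast\ast}$ and positive elements $T\in M_{d}(S^{\ast\ast})^{+}$ of trace $1_{S}$, this reduces to finding $T$ with $\mathrm{tr}_{d}(T)=1_{S}$ and $\mathrm{tr}_{d}(aT)=\phi(a)$ for every $a\in E$. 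I plan to encode the existence of $T$ as a separation problem: one constructs a strictly $\sigma_{n,n}$-positive element $\xi\in M_{2n}(M_{d}(S))_{\mathrm{sa}}$ (for suitable $n$) whose $\sigma_{n,n}$-positivity simultaneously expresses positivity, trace-normalization, and the affine restriction-to-$E$ constraint on a prospective Choi matrix. A positive lower $\sigma_{n,n}$-bound for $\xi$, supplied by the positive matricial Riesz separation property of $M_{d}(S)$, then provides (after a weak$^{\ast}$ limit in $M_{d}(S^{\ast\ast})$ and the approximation step of Proposition \ref{Proposition:characterization}) the Choi matrix of the sought UCP extension. The main technical difficulty lies in the precise design of $\xi$: packaging these three families of affine and positivity constraints into a single $\sigma_{n,n}$-positivity relation, and verifying that any lower bound thereof assembles into an honest UCP extension into $S^{\ast\ast}$ rather than merely an approximate one.
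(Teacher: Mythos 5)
Your treatment of the key reverse implication $(3)\Rightarrow(1)$ is not a proof: the ``precise design of $\xi$'' that you defer is the entire mathematical content of the theorem, and I do not believe a single strictly $\sigma_{n,n}$-positive $\xi\in M_{2n}(M_{d}(S))$ can carry it. A lower $\sigma_{n,n}$-bound of one fixed $\xi$ encodes only finitely many operator inequalities on the prospective Choi matrix, whereas an honest unital completely positive extension $M_{d}(\mathbb{C})\rightarrow S^{\ast\ast}$ requires infinitely many constraints to hold simultaneously. This is exactly why the paper routes through conditions (4) and (5): it first proves $(2)\Rightarrow(4)$ by showing that for a matrix sublinear functional $\theta:\mathbb{C}\rightarrow M_{q}(S)$ each set $L(\sigma_{n,n})=\{r:r\otimes\sigma_{n,n}\preccurlyeq\theta_{2n}(\sigma_{n,n})\}$ is nonempty (one application of the Riesz separation property per $n$, using that $\theta_{2n}(\sigma_{n,n})$ is $\sigma_{n,n}$-positive), and then intersects \emph{all} the sets $L(\alpha)$ via $\sigma(S^{\ast\ast},S^{\ast})$-compactness of order intervals in $M_{q}(S^{\ast\ast})^{+}$; the common element, applied to the specific functional $\theta_{n}(\alpha)=\rho_{nq}(e\otimes\alpha)$ built from the Choi matrix $e$ and $\rho_{n}(\alpha)=\{\phi_{n}(\beta):\beta\in M_{n}(E),\ \beta\geq\alpha\}$, is the Choi matrix of an \emph{exact} extension. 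Your plan omits both the compactness step and the verification that the constraint system attached to $\phi$ is (strictly) $\sigma_{n,n}$-positive --- which is where complete positivity of $\phi$ must enter --- and even the approximate variant via Proposition \ref{Proposition:characterization} needs these. As it stands, $(3)\Rightarrow(1)$ is unproven.

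Two of your auxiliary reductions are also false as stated. First, in $(6)\Rightarrow(5)$ you claim that complete positivity of $\theta$ forces the $v$ produced by (6) to be positive. It does not: take $\theta_{n}(\alpha):=\{\beta\in M_{n}(M_{q}(S))_{\mathrm{sa}}:\beta\geq 1\otimes\alpha\ \text{and}\ \beta\geq -(1\otimes\alpha)\}$, where $1$ is the unit of $M_{q}(S)$. This satisfies all the axioms of Definition \ref{Definition:matrix-sublinear} and is completely positive (every element satisfies $2\beta\geq 0$), yet $v=-1$ satisfies $v\otimes\alpha\preccurlyeq\theta_{n}(\alpha)$ for every $n$ and $\alpha$ while $v\notin M_{q}(S)^{+}$. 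So (6) hands you a selfadjoint $v$ with no positivity; the paper instead derives (5) from (3), letting the \emph{positive} Riesz separation property supply positive elements of $L(\sigma_{n,n})$. Second, the perturbation $x\rightsquigarrow x+\varepsilon 1$ does not exchange $\sigma_{n,n}$-positivity with strict $\sigma_{n,n}$-positivity: for $S=\mathbb{C}$ and $n=1$ the matrix $\mathrm{diag}(-1,2)$ is $\sigma_{1,1}$-positive, but $\mathrm{diag}(-1+\varepsilon,2+\varepsilon)$ is never strictly $\sigma_{1,1}$-positive (compress onto the first coordinate); and in the direction $(2)\Rightarrow(3)$ that your diagram actually needs, the issue is the positivity of the lower bound, which the perturbation does not address at all. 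Note that the paper explicitly quotes Wittstock's remark that the equivalence of Definitions \ref{Definition:matricial-Riesz} and \ref{Definition:positive-matricial-Riesz} ``does not seem to be obvious''; it is obtained only as a corollary of the full theorem, not by a direct $\varepsilon$-argument. Your steps $(1)\Rightarrow(6)$, $(6)\Rightarrow(4)$, and the outlines of $(4)\Rightarrow(2)$ and $(5)\Rightarrow(3)$ do look sound and are close to what the paper does.
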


\begin{proof}
(1)$\Rightarrow $(2): Suppose that $S$ satisfies the weak expectation
property. Fix $q\in \mathbb{N}$. We verify that $M_{q}(S)$ satisfies the
conditions of Lemma \ref{Lemma:approximate}. Fix $n\in \mathbb{N}$, a $%
\sigma _{n,n}$-positive $x\in M_{2n}\left( M_{q}(S)\right) _{\mathrm{sa}}$,
and $\varepsilon >0$. Fix a unital complete order embedding $\eta
:S\rightarrow B\left( H\right) $. Since $S$ satisfies the weak expectation
property, there exists a unital completely positive map $\phi :B\left(
H\right) \rightarrow S^{\ast \ast }$ such that $\phi \circ \eta $ is the
inclusion map of $S$ in $S^{\ast \ast }$. By \cite[Theorem 2.4]%
{wittstock_matrix_1984}, $M_{q}\left( B\left( H\right) \right) \cong B\left(
H\oplus \cdots \oplus H\right) $ satisfies the Riesz separation property.
Therefore there exists $w\in M_{q}\left( B\left( H\right) \right) _{\mathrm{%
sa}}$ such $w\otimes \sigma _{n,n}\leq \eta _{q}(x)$. Thus $v:=\left( \phi
\circ \eta \right) _{q}\left( w\right) $ is an element of $M_{q}\left(
S^{\ast \ast }\right) _{\mathrm{sa}}$ such that $v\otimes \sigma _{n,n}\leq
x $. By $\sigma \left( S^{\ast \ast },S^{\ast }\right) $-density of $S$ in $%
S^{\ast \ast }$ together with a convexity argument as in the proof of
Proposition \ref{Proposition:characterization}, one can now find $v\in
M_{q}(S)_{\mathrm{sa}}$ such that $v\otimes \sigma _{n,n}\leq x+\varepsilon
1_{n}$.

(1)$\Rightarrow $(3): This is the same as (1)$\Rightarrow $(2), using \cite[%
Theorem 2.3]{wittstock_matrix_1984} instead of \cite[Theorem 2.4]%
{wittstock_matrix_1984} and Lemma \ref{Lemma:positive-approximate} instead
of Lemma \ref{Lemma:approximate}.

(2)$\Rightarrow $(4): The proof of this implication is analogous to the
proof of \cite[Lemma 3.4]{wittstock_ein_1981}. Suppose that $\theta :\mathbb{%
C}\rightarrow M_{q}(S)$ is a matrix sublinear functional. For $n\in \mathbb{N%
}$ and $\alpha \in M_{n}(\mathbb{C})_{\mathrm{sa}}$, set%
\begin{equation*}
L(\alpha ):=\left\{ r\in M_{q}\left( S^{\ast \ast }\right) _{\mathrm{sa}%
}:r\otimes \alpha \preccurlyeq \theta _{n}(\alpha )\right\} \text{.}
\end{equation*}%
If $m\in \mathbb{N}$ and $\gamma _{1},\ldots ,\gamma _{\ell }\in M_{n,m}(%
\mathbb{C})$ are such that $\gamma _{1}^{\ast }\alpha \gamma _{1}+\cdots
+\gamma _{\ell }^{\ast }\alpha \gamma _{\ell }=0$, then 
\begin{equation*}
0\preccurlyeq \theta _{m}\left( 0\right) =\theta _{m}\left( \gamma
_{1}^{\ast }\alpha \gamma _{1}+\cdots +\gamma _{\ell }^{\ast }\alpha \gamma
_{\ell }\right) \preccurlyeq \gamma _{1}^{\ast }\theta _{n}(\alpha )\gamma
_{1}+\cdots +\gamma _{\ell }^{\ast }\theta _{n}\left( \alpha \right) \gamma
_{\ell }\text{.}
\end{equation*}%
This shows that $\theta _{n}(\alpha )\subset M_{n}\left( M_{q}(S)\right) _{%
\mathrm{sa}}$ is $\alpha $-positive. Set $L_{m,n}:=L\left( \sigma
_{m,n}\right) $ for $m,n\in \mathbb{N}$. Observe that $L_{n,n}\subset
L_{k,\ell }$ for $0\leq k,\ell \leq n$, and $L_{n,n}$ is nonempty by
assumption. If $\alpha \in M_{n}(\mathbb{C})_{\mathrm{sa}}$, then $L(\alpha
)\subset L_{k,\ell }$ for some $k,\ell \in \mathbb{N}$. Indeed, if $\alpha $
is invertible, then $\alpha =\beta ^{\ast }\sigma _{k,\ell }\beta $ for some 
$\beta \in M_{n}(\mathbb{C})$ invertible and $k,\ell \in \mathbb{N}$. Thus
if $r\in L_{k,\ell }$ then by \cite[Lemma 2.1.3(f)]{wittstock_ein_1981}%
\begin{equation*}
r\otimes \alpha =\beta ^{\ast }\left( r\otimes \sigma _{k,\ell }\right)
\beta \preccurlyeq \beta ^{\ast }\theta _{n}\left( \sigma _{k,\ell }\right)
\beta \preccurlyeq \theta _{n}\left( \beta ^{\ast }\sigma _{k,\ell }\beta
\right) =\theta _{n}(\alpha )\text{.}
\end{equation*}%
When $\alpha $ is not necessarily invertible, one can replace $\alpha $ with 
$\alpha +\delta 1_{n}$ for $\delta >0$ and then take the limit $\delta
\rightarrow 0$. Therefore we have%
\begin{equation*}
\bigcap_{n\in \mathbb{N}}L_{n,n}=\bigcap_{n\in \mathbb{N}}\bigcap_{\alpha
\in M_{n}(\mathbb{C})}L(\alpha )\text{.}
\end{equation*}%
Such an intersection is nonempty by $\sigma \left( S^{\ast \ast },S^{\ast
}\right) $-compactness of the order intervals in $M_{q}\left( S^{\ast \ast
}\right) ^{+}$. Any element in such an intersection satisfies the desired
conclusions.

(3)$\Rightarrow $(5): This is the same as the implication (2)$\Rightarrow $%
(4).

(4)$\Rightarrow $(1): Suppose that $q\in \mathbb{N}$, $E\subset M_{q}(%
\mathbb{C})$, and $\phi :E\rightarrow S$ is a unital completely positive
map. Let $e_{ij}$ be the matrix units of $M_{q}(\mathbb{C})$, and $e=\left[
e_{ij}\right] \in M_{q}\left( M_{q}(\mathbb{C})\right) ^{+}$ be the Choi
matrix. Define a matrix sublinear functional $\rho :M_{q}(\mathbb{C}%
)\rightarrow S$ by setting, for $n\in \mathbb{N}$ and $\alpha \in
M_{n}\left( M_{q}(\mathbb{C})\right) _{\mathrm{sa}}$,%
\begin{equation*}
\rho _{n}(\alpha )=\left\{ \phi _{n}\left( \beta \right) :\beta \in
M_{n}(E),\beta \geq \alpha \right\} \subset M_{n}(S)_{\mathrm{sa}}\text{.}
\end{equation*}%
It is not difficult to verify that $\rho $ is indeed a matrix sublinear
functional. Define now a matrix sublinear functional $\theta :\mathbb{C}%
\rightarrow M_{q}(S)$ by%
\begin{equation*}
\theta _{n}(\alpha )=\rho _{nq}\left( e\otimes \alpha \right) \subset
M_{n}\left( M_{q}(S)\right) _{\mathrm{sa}}\text{.}
\end{equation*}%
By assumption, there exists $v\in M_{q}\left( S^{\ast \ast }\right) _{%
\mathrm{sa}}$ such that $v\otimes \alpha \preccurlyeq \theta _{n}(\alpha )$
for every $n\in \mathbb{N}$ and $\alpha \in M_{n}(\mathbb{C})$. In
particular we have that $-v\preccurlyeq \theta _{n}\left( -1\right) =\rho
_{q}\left( -e\right) \preccurlyeq \phi _{q}\left( -e\right) \leq 0$ and
hence $v\in M_{q}\left( S^{\ast \ast }\right) ^{+}$. Consider now the
completely positive map $\tilde{\phi}:M_{q}(\mathbb{C})\rightarrow S^{\ast
\ast }$ such that $\tilde{\phi}_{q}\left( e\right) =v$. For $\alpha \in E$
we have that%
\begin{equation*}
\tilde{\phi}_{qn}\left( e\otimes \alpha \right) =v\otimes \alpha
\preccurlyeq \theta _{n}(\alpha )=\rho _{nq}\left( e\otimes \alpha \right)
\preccurlyeq \phi _{qn}\left( e\otimes \alpha \right) \text{.}
\end{equation*}%
For $i=1,2,\ldots ,q$, let $\xi _{i}\in \mathbb{C}^{q}$ be the column vector
whose only nonzero entry is a $1$ in the $i$-th row, and then set%
\begin{equation*}
\xi =%
\begin{bmatrix}
\xi _{1} \\ 
\xi _{2} \\ 
\vdots  \\ 
\xi _{q}%
\end{bmatrix}%
\in \mathbb{C}^{q^{2}}\text{.}
\end{equation*}%
Then we have that%
\begin{equation*}
\tilde{\phi}(\alpha )=\xi ^{\ast }\tilde{\phi}_{qn}\left( e\otimes \alpha
\right) \xi \leq \xi ^{\ast }\phi _{qn}\left( e\otimes \alpha \right) \xi
=\phi (\alpha )\text{.}
\end{equation*}%
This shows that $\tilde{\phi}|_{E}=\phi $.

(5)$\Rightarrow $(1): This is the same as (4)$\Rightarrow $(1).

(1)$\Rightarrow $(6): Consider a unital complete order embedding $\eta
:S\rightarrow B\left( H\right) $. By \cite[Theorem 2.3.1]{wittstock_ein_1981}
there exists a selfadjoint linear map $\psi :V\rightarrow M_{q}\left(
B\left( H\right) \right) $ such that $\psi |_{W}=\eta _{q}\circ \phi _{0}$
and $\psi \preccurlyeq \eta _{q}\circ \theta $. Since $S$ satisfies the weak
expectation property, there exists a unital completely positive map $\rho
:S\rightarrow S^{\ast \ast }$ such that $\eta \circ \rho :S\rightarrow
S^{\ast \ast }$ is the inclusion map. Therefore the map $\phi :=\rho
_{q}\circ \eta $ satisfies the desired conclusions.

(6)$\Rightarrow $(4): Obvious.
\end{proof}

Suppose now that $S$ is a function system, in which case $S\cong A(K)$ where 
$K$ is the state space of $S$. In this case, $S$ satisfies the weak
expectation property if and only if it is nuclear, which in turn is
equivalent to the assertion that $K$ is a simplex. Therefore in view of
Proposition \ref{Proposition:commutative}, Theorem \ref{Theorem:nc-Riesz}
can be seen as the noncommutative analog of the fact that a compact convex
set $K$ is a simplex if and only if $A(K)$ satisfies the Riesz separation
property.

\section{The weak expectation property and model theory\label{Section:model}}

\subsection{Positively existentially closed operator systems}

We now recall some notions from the logic for metric structures, in the
specific setting of operator systems. Let us say that a degree $1$ matrix
*-polynomial is an expression $p\left( x_{1},\ldots ,x_{n}\right) $ of the
form%
\begin{equation*}
\alpha _{1}^{\ast }x_{1}\beta _{1}+\cdots +\alpha _{n}^{\ast }x_{n}\beta
_{n}+\gamma _{1}^{\ast }1\delta _{1}+\cdots +\gamma _{n}^{\ast }1\delta _{n}
\end{equation*}%
where $\alpha _{i},\beta _{i},\gamma _{i},\delta _{i}$ are scalar matrices
of size $1\times d$ for some $d\in \mathbb{N}$. Let $S$ be an operator
system, and $a_{1},\ldots ,a_{n}$ be elements of $S$. Identifying $1$ in $%
p\left( x_{1},\ldots ,x_{n}\right) $ with the unit of $S$, one can regard $%
p\left( a_{1},\ldots ,a_{n}\right) $ as an element of $M_{d}(S)$ in a
natural way.

An \emph{atomic formula} is an expression of the form $\left\Vert p\left(
x_{1},\ldots ,x_{n}\right) \right\Vert $ for some degree $1$ matrix
*-polynomial. A positive \emph{positive quantifier-free formula} $\varphi
\left( x_{1},\ldots ,x_{n}\right) $ is an expression of the form%
\begin{equation*}
f\left( \varphi _{1}\left( x_{1},\ldots ,x_{n}\right) ,\ldots ,\varphi
_{k}\left( x_{1},\ldots ,x_{n}\right) \right) \text{,}
\end{equation*}%
where $f:\mathbb{R}^{n}\rightarrow \mathbb{R}$ is a continuous nondecreasing
functions and $\varphi _{1},\ldots ,\varphi _{k}$ are atomic formulas. Given
a positive quantifier-free formula $\varphi \left( x_{1},\ldots
,x_{n}\right) $, an operator system $S$, and elements $a_{1},\ldots ,a_{n}$
of $S$, one can define in a natural way the \emph{interpretation} $\varphi
\left( a_{1},\ldots ,a_{n}\right) $. We denote by $\mathrm{Ball}(S)$ the
unit ball of the operator system $S$.

\begin{definition}
\label{Definition:pec}An operator system $S$ is \emph{positively
existentially closed }in the class of operator systems if for any operator
system inclusion $S\subset T$, positive quantifier-free formula $\varphi
\left( x_{1},\ldots ,x_{n},y_{1},\ldots ,y_{m}\right) $, and $a_{1},\ldots
,a_{n}\in S$, one has that%
\begin{equation*}
\inf \left\{ \varphi \left( a_{1},\ldots ,a_{n},b_{1},\ldots ,b_{m}\right)
:b_{1},\ldots ,b_{m}\in \mathrm{Ball}(T)\right\} =\inf \left\{ \varphi
\left( a_{1},\ldots ,a_{n},b_{1},\ldots ,b_{m}\right) :b_{1},\ldots
,b_{m}\in \mathrm{\mathrm{\mathrm{Ball}}}(S)\right\} \text{.}
\end{equation*}
\end{definition}

Equivalently, one can say that $S$ is positively existentially closed if for
any operator system inclusion $S\subset T$, degree $1$ matrix *-polynomials $%
p_{1}\left( x_{1},\ldots ,x_{n},y_{1},\ldots ,y_{m}\right) ,\ldots
,p_{k}\left( x_{1},\ldots ,x_{n},y_{1},\ldots ,y_{m}\right) $, elements $%
a_{1},\ldots ,a_{n}\in \mathrm{Ball}(S)$ and $b_{1},\ldots ,b_{m}\in \mathrm{%
Ball}(T)$, and $\varepsilon >0$, there exist $c_{1},\ldots ,c_{m}\in \mathrm{%
Ball}(S)$ such that, for every $i\in \left\{ 1,2,\ldots ,k\right\} $,%
\begin{equation*}
\left\Vert p_{i}\left( a_{1},\ldots ,a_{n},c_{1},\ldots ,c_{m}\right)
\right\Vert \leq \left\Vert p_{i}\left( a_{1},\ldots ,a_{n},b_{1},\ldots
,b_{m}\right) \right\Vert +\varepsilon \text{.}
\end{equation*}

We now observe that, for an operator system, being positively existentially
closed is equivalent to having the weak expectation property. In the case of
unital C*-algebras, this is proved in \cite[Proposition 5.1]%
{goldbring_omitting_2015}.

\begin{proposition}
\label{Proposition:pec}Suppose that $S$ is an operator system. The following
statements are equivalent:

\begin{enumerate}
\item $S$ has the weak expectation property;

\item $S$ is positively existentially closed in the class of operator
systems.
\end{enumerate}
\end{proposition}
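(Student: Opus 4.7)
Both implications will be proved using Proposition~\ref{Proposition:characterization}, which recasts the weak expectation property as an approximate lifting statement for u.c.p.\ maps from subsystems of matrix algebras into $S$.

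For $(1)\Rightarrow(2)$, suppose $S$ has the weak expectation property and fix $S\subset T$, degree $1$ matrix $\ast$-polynomials $p_1,\dots,p_k$, elements $a_1,\dots,a_n\in\mathrm{Ball}(S)$, $b_1,\dots,b_m\in\mathrm{Ball}(T)$, and $\varepsilon>0$. Realize $T$ unitally and completely order isomorphically inside some $B(H)$. Applying the weak expectation property to the inclusion $S\subset B(H)$ and the canonical map $\iota:S\hookrightarrow S^{\ast\ast}$ yields a u.c.p.\ map $\Phi:B(H)\to S^{\ast\ast}$ whose restriction to $S$ is $\iota$. Since $\Phi$ is u.c.p.\ (hence completely contractive) and fixes the $a_i$'s,
$$\bigl\|p_i(\bar a,\Phi(\bar b))\bigr\|=\bigl\|\Phi_{d_i}(p_i(\bar a,\bar b))\bigr\|\leq\bigl\|p_i(\bar a,\bar b)\bigr\|$$
for every $i$. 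By Goldstine's theorem each $\Phi(b_j)\in\mathrm{Ball}(S^{\ast\ast})$ is a weak-$\ast$ limit of a net from $\mathrm{Ball}(S)$. The Hahn--Banach convex-combination argument from the proof of Proposition~\ref{Proposition:characterization}, applied to the finite vector of values $(p_i(\bar a,\,\cdot\,))_{i\leq k}$, produces $c_j\in(1+\varepsilon)\mathrm{Ball}(S)$ simultaneously norm-close to the $\Phi(b_j)$; rescaling by $(1+\varepsilon)^{-1}$ drops $c_j$ into $\mathrm{Ball}(S)$ and the continuity of the $p_i$'s absorbs the resulting error into $\varepsilon$.

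For $(2)\Rightarrow(1)$, assume $S$ is positively existentially closed; I verify condition~(3) of Proposition~\ref{Proposition:characterization}. Given $d$, $E\subset M_d(\mathbb{C})$, a u.c.p.\ map $\phi:E\to S$, and $\varepsilon>0$, realize $S$ unitally inside some $B(H)$ and apply Arveson's extension theorem to extend $\phi$ to a u.c.p.\ map $\hat\phi:M_d(\mathbb{C})\to B(H)$. Let $T$ be the operator subsystem of $B(H)$ generated by $S\cup\hat\phi(M_d(\mathbb{C}))$, so that $S\subset T$. The elements $b_{ij}:=\hat\phi(e_{ij})\in\mathrm{Ball}(T)$ satisfy the following \emph{finite} list of equational positive quantifier-free conditions with parameters in $S$: (a)~self-adjointness $\|b_{ij}-b_{ji}^{\ast}\|=0$; (b)~positivity of the Choi matrix $[b_{ij}]\in M_d(T)$, which---using the standard fact that for self-adjoint $A$ with $\|A\|\leq M$ one has $A\geq 0$ iff $\|A-M/2\|\leq M/2$---is equivalent to the single norm inequality $\bigl\|[b_{ij}]-\tfrac{d}{2}I_d\bigr\|_{M_d(T)}\leq\tfrac{d}{2}$; (c)~unitality $\|\sum_i b_{ii}-1\|=0$; and (d)~for each $x=\sum x_{ij}e_{ij}$ in a fixed finite basis of $E$, the agreement $\bigl\|\sum_{ij}x_{ij}b_{ij}-\phi(x)\bigr\|=0$. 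Positive existential closedness, applied to the finite collection of matrix $\ast$-polynomials underlying (a)--(d), produces, for any $\varepsilon'>0$, elements $c_{ij}\in\mathrm{Ball}(S)$ satisfying (a)--(d) up to $\varepsilon'$. The map $\tilde\psi:M_d(\mathbb{C})\to S$ defined by $\tilde\psi(e_{ij}):=c_{ij}+\varepsilon'\delta_{ij}1_S$ is then completely positive with $\|\tilde\psi(1)-1\|=O(d\varepsilon')$, and \cite[Lemma~8.1]{lupini_fraisse_2015} perturbs it to a genuine u.c.p.\ map $\psi:M_d(\mathbb{C})\to S$. Condition (d) forces $\|\psi|_E-\phi\|<\varepsilon$ once $\varepsilon'$ is chosen small enough in terms of $d$, $\varepsilon$, and the finite-dimensional $E$.

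The only nontrivial point I expect is step (b): encoding complete positivity of the Choi matrix by finitely many positive quantifier-free norm conditions. Once one uses the characterization of positivity of a self-adjoint element by the single scalar inequality $\|A-M/2\|\leq M/2$ inside a suitable ball of radius $M$, and combines it with the Choi-matrix description of u.c.p.\ maps out of $M_d(\mathbb{C})$ and \cite[Lemma~8.1]{lupini_fraisse_2015} to upgrade ``approximately u.c.p.'' to genuinely u.c.p., the proof of $(2)\Rightarrow(1)$ reduces to the same approximation arguments already present in Proposition~\ref{Proposition:characterization}.
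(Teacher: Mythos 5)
Your $(2)\Rightarrow(1)$ direction is essentially correct and follows the same route as the paper, which defers to the proof of \cite[Proposition 4.18]{goldbring_omitting_2015}: encode the Choi-matrix description of an approximately unital, approximately completely positive extension by finitely many positive quantifier-free conditions, pull the witnesses into $S$ using positive existential closedness, and upgrade to a genuine unital completely positive map via \cite[Lemma 8.1]{lupini_fraisse_2015}. Your handling of step (b) via the equivalence $A\geq 0\Leftrightarrow\left\Vert A-\tfrac{M}{2}1\right\Vert\leq\tfrac{M}{2}$ for self-adjoint $A$ of norm at most $M$ is fine.

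The $(1)\Rightarrow(2)$ direction has a genuine gap at the step where you invoke Goldstine's theorem plus the Hahn--Banach/Mazur convex-combination argument to produce $c_{j}\in S$ ``norm-close to the $\Phi(b_{j})$''. Mazur's theorem (the weak closure of a convex set equals its norm closure) applies only when the limit point lies in the space $S$ itself; here $\Phi(b_{j})$ lies in $S^{\ast\ast}$, and $S$ is a \emph{norm-closed} subspace of $S^{\ast\ast}$, so in general no element of $S$ is norm-close to $\Phi(b_{j})$ at all. The same obstruction persists in the weaker form you actually need: the vector of values $\left(p_{i}(\bar{a},\Phi(\bar{b}))\right)_{i}$ lies in $\bigoplus_{i}M_{d_{i}}(S^{\ast\ast})$ rather than $\bigoplus_{i}M_{d_{i}}(S)$, and weak-$\ast$ convergence $p_{i}(\bar{a},\bar{s}_{\lambda})\rightarrow p_{i}(\bar{a},\Phi(\bar{b}))$ only gives lower semicontinuity of the norm, which bounds the limit by the approximants and not the other way around. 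This is in contrast with the convexity argument in the proof of Proposition \ref{Proposition:characterization}, which works precisely because the weak limit there is $0\in S$. What you are implicitly asserting is that the complete contraction $\Phi|_{F}:F\rightarrow S^{\ast\ast}$, for $F=\mathrm{span}\{a_{1},\ldots,a_{n},b_{1},\ldots,b_{m},1\}$, can be approximated point-weak-$\ast$ by maps $F\rightarrow S$ with control on the matrix norms up to level $\max_{i}d_{i}$; this is the local reflexivity of $S$ as an operator space, which fails in general---for instance $S=B(H)$ has the weak expectation property but is not locally reflexive. The paper circumvents this by passing to the $q$-minimal operator system structures $\mathrm{OMIN}_{q}$ for $q$ large enough that the formula $\varphi$ (which only sees matrix norms up to a fixed level) takes the same value; $\mathrm{OMIN}_{q}(B(H))$ is exact, hence locally reflexive, and only then do the approximation and convexity arguments go through. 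Your proof needs this reduction, or some substitute for it.
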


Proposition \ref{Proposition:pec} generalizes \cite[Proposition 4.18]%
{goldbring_omitting_2015}, where it is proved that for an exact operator
system being positively existentially closed is equivalent to being nuclear.

In the following, we consider the construction of a $q$-minimal operator
system from \cite{xhabli_super_2012}. Suppose that $S$ is an operator
system. Then the $q$-minimal operator system $\mathrm{OMIN}_{q}(S)$ is the
image of $S$ under the direct sum of all the unital completely positive maps 
$\phi :S\rightarrow M_{q}(\mathbb{C})$. The canonical surjective isomorphism 
$\eta :S\rightarrow \mathrm{OMIN}_{q}(S)$ is a unital completely positive
map such that $\eta _{q}:M_{q}(S)\rightarrow M_{q}\left( \mathrm{OMIN}%
_{q}(S)\right) $ is an order isomorphism. Observe that the assignment $%
S\mapsto \mathrm{OMIN}_{q}(S)$ is functorial, and any unital completely
positive map $\phi :S\rightarrow T$ induces a canonical unital completely
positive map $\mathrm{OMIN}_{q}\left( \phi \right) :\mathrm{OMIN}%
_{q}(S)\rightarrow \mathrm{OMIN}_{q}(T)$. By injectivity of $M_{q}(\mathbb{C}%
)$, if $\phi $ is a unital complete order embedding, then $\mathrm{OMIN}%
_{q}\left( \phi \right) $ is a unital complete order embedding. In
particular, an inclusion of operator systems $S\subset T$ induces a
canonical inclusion of operator systems $\mathrm{OMIN}_{q}(S)\subset \mathrm{%
OMIN}_{q}(T)$. Finally, $\mathrm{OMIN}_{q}\left( S^{\ast \ast }\right) $ is
unitally completely order isomorphic to $\mathrm{OMIN}_{q}(S)^{\ast \ast }$,
and if $\iota _{S}:S\rightarrow S^{\ast \ast }$ is the canonical inclusion
map, then $\mathrm{OMIN}_{q}\left( \iota _{S}\right) $ can be identified
with $\iota _{\mathrm{OMIN}_{q}(S)}$. We recall that an operator space $S$
is \emph{locally reflexive }if for any finite-dimensional operator space $F$
and complete contraction $\phi :F\rightarrow S^{\ast \ast }$, $\phi $ is the
point-$\sigma \left( S^{\ast \ast },S^{\ast }\right) $-limit of completely
contractions from $F$ to $S$; see \cite[Section 4]{effros_injectivity_2001}.

\begin{proof}[Proof of Proposition \protect\ref{Proposition:pec}]
The proof of \cite[Proposition 4.18]{goldbring_omitting_2015} shows that a
positively existentially closed operator system satisfies Condition\ (2) of
Proposition \ref{Proposition:characterization}. Indeed, the existence of a
unital completely positive map $\psi $ as in Condition (2) of Proposition %
\ref{Proposition:characterization} can be expressed in terms of the value of
a suitable quantifier-free formula. We now prove the converse.

Suppose that $S$ is an operator system satisfying the weak expectation
property. Suppose that $S\subset T$ is an operator system inclusion, $%
\varphi \left( x_{1},\ldots ,x_{n},y\right) $ is a positive quantifier-free
formula, $a_{1},\ldots ,a_{n}\in \mathrm{Ball}(S)$, $b_{1},\ldots ,b_{m}\in 
\mathrm{Ball}(T)$, and $\varepsilon >0$. We want to prove that there exist $%
c_{1},\ldots ,c_{m}\in \mathrm{Ball}(S)$ such that%
\begin{equation*}
\varphi \left( a_{1},\ldots ,a_{n},c_{1},\ldots ,c_{m}\right) \leq \varphi
\left( a_{1},\ldots ,a_{n},b_{1},\ldots ,b_{m}\right) +\varepsilon \text{.}
\end{equation*}%
Without loss of generality, we can assume that $T=B\left( H\right) $. Since $%
S$ satisfies the weak expectation property, there exists a unital completely
positive map $\psi :B\left( H\right) \rightarrow S^{\ast \ast }$ such that $%
\psi |_{S}$ is the inclusion map of $S$ into $S^{\ast \ast }$.

Set $F:=\mathrm{span}\left\{ a_{1},\ldots ,a_{n},b_{1},\ldots
,b_{m},1\right\} \subset B\left( H\right) $. One can choose $q\in \mathbb{N}$
large enough such that, denoting by $\eta :F\rightarrow \mathrm{OMIN}%
_{q}\left( F\right) $ the canonical linear isomorphism,%
\begin{equation*}
\varphi \left( \eta \left( a_{1}\right) ,\ldots ,\eta \left( a_{n}\right)
,\eta \left( b_{1}\right) ,\ldots ,\eta \left( b_{m}\right) \right) =\varphi
\left( a_{1},\ldots ,a_{n},b_{1},\ldots ,b_{m}\right) \text{.}
\end{equation*}

Consider now the unital completely positive map 
\begin{equation*}
\mathrm{OMIN}_{q}(\psi ):\mathrm{OMIN}_{q}\left( B\left( H\right) \right)
\rightarrow \mathrm{OMIN}_{q}\left( S^{\ast \ast }\right) \cong \mathrm{OMIN}%
_{q}(S)^{\ast \ast }\text{.}
\end{equation*}%
Observe that $\mathrm{OMIN}_{q}\left( B\left( H\right) \right) $ is exact
and, particularly, locally reflexive \cite[Corollary 14.6.5]%
{effros_operator_2000}. Therefore $\mathrm{OMIN}_{q}\left( \psi \right) $
can be approximated in the point-$\sigma \left( \mathrm{OMIN}_{q}(S)^{\ast
\ast },\mathrm{OMIN}_{q}(S)^{\ast }\right) $-topology by completely
contractive maps $\mathrm{OMIN}_{q}\left( F\right) \rightarrow \mathrm{OMIN}%
_{q}(S)$. Furthermore $\mathrm{O\mathrm{MIN}}_{q}\left( B\left( H\right)
\right) $ is unitally completely order isomorphic to a direct sum of copies
of $M_{q}(\mathbb{C})$. Fix $\delta >0$. A convexity argument as in the
proof of Proposition \ref{Proposition:characterization} shows that there
exists a unital completely positive map $\theta :\mathrm{O\mathrm{MIN}}%
_{q}\left( F\right) \rightarrow \mathrm{O\mathrm{MIN}}_{q}(S)$ such that $%
\left\Vert \theta |_{\mathrm{O\mathrm{MIN}}_{q}(E)}-\iota \right\Vert
<\delta $, where $\iota :\mathrm{O\mathrm{MIN}}_{q}(E)\rightarrow \mathrm{O%
\mathrm{MIN}}_{q}(S)$ is the inclusion map. By choosing $\delta $ small
enough one has that%
\begin{eqnarray*}
\varphi \left( a_{1},\ldots ,a_{n},\left( \theta \circ \eta \right) \left(
b_{1}\right) ,\ldots ,\left( \theta \circ \eta \right) \left( b_{m}\right)
\right)  &\leq &\varphi \left( \left( \theta \circ \eta \right) \left(
a_{1}\right) ,\ldots ,\left( \theta \circ \eta \right) \left( a_{n}\right)
,\left( \theta \circ \eta \right) \left( b_{1}\right) ,\ldots ,\left( \theta
\circ \eta \right) \left( b_{m}\right) \right) +\varepsilon  \\
&\leq &\varphi \left( \eta \left( a_{1}\right) ,\ldots ,\eta \left(
a_{n}\right) ,\eta \left( b_{1}\right) ,\ldots ,\eta \left( b_{m}\right)
\right) +\varepsilon  \\
&=&\varphi \left( a_{1},\ldots ,a_{n},b_{1},\ldots ,b_{m}\right)
+\varepsilon \text{.}
\end{eqnarray*}%
This concludes the proof that $S$ is positively existentially closed.
\end{proof}

\subsection{The weak expectation property and the complete tight Riesz
interpolation}

The complete tight Riesz interpolation property has been introduced for
unital C*-algebras by Kavruk in \cite{kavruk_weak_2012}. We consider here
its straightforward generalization to operator systems. Let $S\subset T$ be
an inclusion of operator systems. For elements $a,b$ of $M_{n}(T)$, write $%
a\ll b$ if $b-a\geq \delta 1$ for some $\delta >0$.

\begin{definition}[Kavruk]
The operator system $S$ has the \emph{relative complete tight Riesz
interpolation property }in $T$ if for any $n,k\in \mathbb{N}$, $x_{1},\ldots
,x_{k},y_{1},\ldots ,y_{k}\in M_{d}(S)_{\mathrm{sa}}$ and $b\in M_{n}(T)_{%
\mathrm{sa}}$ such that $x_{i}\ll b\ll y_{j}$ for every $i,j\in \left\{
1,2,\ldots ,k\right\} $, there exists $a\in M_{n}(S)_{\mathrm{sa}}$ such
that $x_{i}\leq a\leq y_{j}$ for every $i,j\in \left\{ 1,2,\ldots ,k\right\} 
$.
\end{definition}

It is proved in \cite[Theorem 7.4]{kavruk_weak_2012} that a unital
C*-algebra $A\subset B\left( H\right) $ satisfies the weak expectation
property if and only if it has the relative complete tight Riesz
interpolation property in $B\left( H\right) $. An alternative proof using
the characterization of the model-theoretic characterization of the weak
expectation property is presented in \cite[Theorem 5.5]%
{goldbring_omitting_2015}. In view of Proposition \ref{Proposition:pec},
such a proof applies equally well to operator systems, and it gives the
following.

\begin{proposition}
\label{Proposition:right}Suppose that $S\subset B\left( H\right) $ is an
operator system. The following assertions are equivalent.

\begin{enumerate}
\item $S$ satisfies the weak expectation property;

\item $S$ has the relative complete tight Riesz interpolation property in $%
B\left( H\right) $;

\item $S$ has the relative complete tight Riesz interpolation property in $T$
for some operator system inclusion $S\subset T$, where $T$ is an operator
system with the weak expectation property.
\end{enumerate}
\end{proposition}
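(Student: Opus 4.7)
The plan is to adapt the proof of \cite[Theorem 5.5]{goldbring_omitting_2015} from unital C*-algebras to operator systems, using Proposition \ref{Proposition:pec} as the bridge that converts the weak expectation property into the purely syntactic statement that $S$ is positively existentially closed. The implication $(2)\Rightarrow(3)$ is immediate since $T=B(H)$ is injective and hence has the weak expectation property, so I concentrate on $(1)\Rightarrow(2)$ and $(3)\Rightarrow(1)$.

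For $(1)\Rightarrow(2)$: assume $S$ has the weak expectation property, so by Proposition \ref{Proposition:pec} it is positively existentially closed. Given $x_{1},\ldots,x_{k},y_{1},\ldots,y_{k}\in M_{n}(S)_{\mathrm{sa}}$ and $b\in M_{n}(B(H))_{\mathrm{sa}}$ with $x_{i}\ll b\ll y_{j}$, I would pick $\delta>0$ with $b-x_{i}\geq\delta 1$ and $y_{j}-b\geq\delta 1$ for all $i,j$, and encode the inequalities $a\geq x_{i}+(\delta/2)1$ and $a\leq y_{j}-(\delta/2)1$ via norm conditions of the type $\|M\cdot 1-(a-x_{i}-(\delta/2)1)\|\leq M$. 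Once the matrix entries of $a$ are introduced as scalar variables, these are positive quantifier-free conditions in the language of operator systems, witnessed in $B(H)$ by $b$. Hence being positively existentially closed transfers an approximate witness $a'\in M_{n}(S)_{\mathrm{sa}}$ back to $S$, and the slack $\delta$ absorbs the approximation error to produce an exact interpolant $a\in M_{n}(S)_{\mathrm{sa}}$ with $x_{i}\leq a\leq y_{j}$.

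For $(3)\Rightarrow(1)$: I would verify condition (2) of Proposition \ref{Proposition:characterization}. Fix $E\subset M_{d}(\mathbb{C})$, a unital completely positive map $\phi:E\to S$, and $\varepsilon>0$. Since $T$ has the weak expectation property, Proposition \ref{Proposition:characterization}(3) applied to $T$ produces a unital completely positive map $\psi_{T}:M_{d}(\mathbb{C})\to T$ with $\|\psi_{T}|_{E}-\phi\|<\varepsilon/2$. Via the Choi matrix correspondence, $\psi_{T}$ corresponds to $b:=[\psi_{T}(e_{ij})]\in M_{d}(T)^{+}$. Fixing a selfadjoint basis of $E$ and translating the approximate agreement $\psi(e)\approx\phi(e)$ together with positivity into two-sided matrix inequalities, one obtains finite families $x_{1},\ldots,x_{k},y_{1},\ldots,y_{k}\in M_{d}(S)_{\mathrm{sa}}$ (with, say, $x_{1}=0$ encoding positivity of the Choi matrix) such that $x_{i}\ll b\ll y_{j}$ for every $i,j$. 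The relative complete tight Riesz interpolation property of $S$ in $T$ then yields $a\in M_{d}(S)_{\mathrm{sa}}$ with $x_{i}\leq a\leq y_{j}$, hence $a\in M_{d}(S)^{+}$; the associated map $\psi:M_{d}(\mathbb{C})\to S$ defined by $\psi(e_{ij})=a_{ij}$ is completely positive and satisfies $\|\psi(1)-1\|\leq\varepsilon$ and $\|\psi|_{E}-\phi\|\leq\varepsilon$, as required.

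The main obstacle is the careful translation in $(3)\Rightarrow(1)$: one must encode the conditions that $\psi$ is approximately unital and that $\psi|_{E}$ approximates $\phi$ simultaneously as a family of strict two-sided inequalities in $M_{d}(S)_{\mathrm{sa}}$, and ensure that the slack inherited from the approximation by $\psi_{T}$ is genuinely uniform so that $b$ witnesses $x_{i}\ll b\ll y_{j}$. This is a finite-dimensional matrix computation using the Choi matrix, identical in spirit to the C*-algebra case treated in \cite[Theorem 5.5]{goldbring_omitting_2015} and \cite[Theorem 7.4]{kavruk_weak_2012}; no essentially new ideas are needed once Proposition \ref{Proposition:pec} is available.
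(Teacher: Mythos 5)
Your treatment of $(1)\Rightarrow(2)$ and $(2)\Rightarrow(3)$ is fine and is exactly the route the paper intends: the witness $b$ itself realizes the interpolation in $B(H)$, the (approximate) positivity conditions $a\geq x_{i}+(\delta/2)1$, $a\leq y_{j}-(\delta/2)1$ are expressible by atomic formulas, and the strict gap $\delta$ absorbs the error coming from Proposition \ref{Proposition:pec}.

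The problem is $(3)\Rightarrow(1)$, precisely at the step you yourself flag as ``the main obstacle'' and then dismiss as a routine finite-dimensional computation. It is not routine, and the encoding you propose does not work. The relative complete tight Riesz interpolation property only lets you impose constraints of the form $x_{i}\leq a\leq y_{j}$ for elements $x_{i},y_{j}\in M_{d}(S)_{\mathrm{sa}}$ that you must name \emph{in advance} using data from $S$. The condition ``$\psi|_{E}$ approximates $\phi$'' is a condition on the functionals $a\mapsto L_{h}(a):=\sum_{ij}h_{ij}a_{ij}$ for $h$ ranging over a basis of $E_{\mathrm{sa}}$, and for non-positive selfadjoint $h$ the map $L_{h}$ is not monotone. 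Writing $h=h_{+}-h_{-}$, the best two-sided order constraints can give is $L_{h}(a)\leq L_{h_{+}}(y_{j})-L_{h_{-}}(x_{i})$, so to force $L_{h}(a)\approx\phi(h)$ you would need $y_{j},x_{i}\in M_{d}(S)$ with $L_{h_{+}}(y_{j})\approx L_{h_{+}}(b)=\tilde{\phi}(h_{+})$ and $L_{h_{-}}(x_{i})\approx\tilde{\phi}(h_{-})$; but $h_{\pm}\notin E$ in general, so $\tilde{\phi}(h_{\pm})$ lives in $T$ with no reason to be approximable from $S$ --- producing such approximants is essentially the statement you are trying to prove. (There is also the smaller point that $x_{1}=0\ll b$ fails for a general Choi matrix, though that is fixable by mixing $\psi_{T}$ with a trace.) The implication ``interpolation $\Rightarrow$ WEP'' is the genuinely hard half of Kavruk's Theorem 7.4, and its proof goes through operator system quotients, duals and the min/max tensor cones, not a direct Choi-matrix translation; the paper's own ``proof'' of Proposition \ref{Proposition:right} is precisely the assertion that the Goldbring--Sinclair argument carries over once Proposition \ref{Proposition:pec} is available, so this direction cannot be replaced by the sketch you give. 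A cleaner way to handle condition (3) specifically is to reduce it to (2): given $x_{i}\ll b\ll y_{j}$ with $b\in M_{n}(B(H))_{\mathrm{sa}}$ and $x_{i},y_{j}\in M_{n}(S)_{\mathrm{sa}}\subset M_{n}(T)_{\mathrm{sa}}$, use the weak expectation property of $T$ (hence $T$ positively existentially closed, by Proposition \ref{Proposition:pec}) to replace $b$ by some $b'\in M_{n}(T)_{\mathrm{sa}}$ with $x_{i}\ll b'\ll y_{j}$, and then apply the relative interpolation of $S$ in $T$; but the burden of ``$(2)\Rightarrow(1)$'' remains and requires the Kavruk/Goldbring--Sinclair machinery.
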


\bibliographystyle{amsplain}
\bibliography{wep-biblio}

\end{document}